\newcommand{\mytitle}{Relative algebraic \texorpdfstring{$K$}{K}-theory by elementary means}
\newcommand{\myauthor}{Daniel R. Grayson}
\newlength{\xtrawidth}
\newcommand{\beq}{\begin{equation}}
\newcommand{\eeq}{\end{equation}}
\newcommand{\ba}{\begin{array}}
\newcommand{\ea}{\end{array}}
\newcommand{\bea}{\begin{eqnarray}}
\newcommand{\eea}{\end{eqnarray}}
\newcommand{\bean}{\begin{eqnarray*}}
\newcommand{\eean}{\end{eqnarray*}}
\newcommand{\comment}[1]{}
\newcommand{\Om}{\Omega}
\newcommand{\Gr}{\mathop{Gr}}
\newcommand{\gr}{\mathop{gr}}
\newcommand{\Ab}{{\mathcal Ab}}
\newcommand{\coker}{\mathop{{\rm coker}}}
\newcommand{\ZZ}{\mathbb{Z}}
\newcommand{\NN}{\mathbb{N}}
\newcommand{\cA}{{\mathcal A}}
\newcommand{\cC}{{\mathcal C}}
\newcommand{\cD}{{\mathcal D}}
\newcommand{\cM}{{\mathcal M}}
\newcommand{\cN}{{\mathcal N}}
\theoremstyle{plain}
  \newtheorem{theorem}            {Theorem}
  \newtheorem{corollary} [theorem]{Corollary}
  \newtheorem{lemma}     [theorem]{Lemma}
  \newtheorem{conjecture}[theorem]{Conjecture}
\theoremstyle{definition}
  \newtheorem{definition}[theorem]{Definition}
  \newtheorem{remark}    [theorem]{Remark}
\newcommand{\Tot}{\mathop{{\rm Tot}}}
\newcommand{\Cone}{\mathop{{\rm Cone}}}
\newcommand{\isom}{\cong}
\newcommand{\Top}{{\scriptscriptstyle\top}}
\newcommand{\Bot}{{\scriptscriptstyle\bot}}
\newcommand{\topbot}{\hbox{\hbox to 0pt{$\Bot$\hss}$\Top$}}
\newcommand{\defeq}{\vcentcolon=}
\newcommand{\src}{{\rm src}}
\newcommand{\sw}{\tau}
\newcommand{\tar}{{\rm tar}}
\newcommand{\qi}{{\rm comp}}
\renewcommand{\autoref}{\ref}
\begin{document}
\title{\mytitle}
\author{\myauthor}
\email{drg@illinois.edu}
\urladdr{\href{http://dangrayson.com/}{http://dangrayson.com/}}
\def\emailaddrname{{\itshape Email}}
\def\urladdrname{{\itshape Home page}}
\date{}

\abstract

  In a previous paper I gave a presentation for the Quillen higher algebraic $K$-groups of an exact category in terms of {\em acyclic binary
    multicomplexes}.  In this paper I take that presentation as a definition of the higher $K$-groups, generalize it to the relative $K$-groups
  of an exact functor between exact categories, and produce the corresponding long exact sequence by elementary means, without homotopy theory.

\endabstract

\subjclass[2000]{19D99}
\keywords{algebraic K-theory, acyclic binary complexes, generators and relations, higher algebraic K-groups}
\maketitle

\numberwithin{theorem}{section}

\section*{Introduction}

  The paper \cite{grayson-MR2947948} provided the first presentation by generators and relations for the higher algebraic $K$-groups of Quillen,
  which are defined as homotopy groups of certain spaces constructed by combinatorial means from the algebraic situation under consideration.
  The generators and relations involve ingredients just a bit more complicated than the notion of chain complex from homological algebra.

  A fundamental aspect of algebraic $K$-theory is the construction and use of long exact sequences of algebraic $K$-groups, such as those
  provided by the localization theorem for abelian categories or the localization theorem for projective modules of Quillen.  The most general
  of these is the long exact sequence
  \[ \dots \to K_{n+1} \cN \to K_{n+1}[F] \to K_n \cM \to K_n \cN \to \dots \]
  for an exact functor $F : \cM \to \cN$, involving the {\em relative} $K$-groups of $F$, which are also defined as homotopy groups.

  This paper\footnote{~Permanent ID of this document: 5ed1e5cfdacba5c670e4dfab8c111974; Date: 2013.12.2.} presents a conjectural presentation for the relative $K$-groups, adopts it as a definition, and then uses it directly to
  construct the long exact sequence by elementary means, without homotopy theory.  

  Hence the long exact sequence of relative $K$-theory in algebraic $K$-theory is intermediate in complexity between the long exact sequence in
  homotopy theory and the long exact sequence in homological algebra.  The result can also be viewed as an imaginary reconstruction of history,
  providing a step toward an approach to the invention of higher algebraic $K$-theory without homotopy theory and could have been done in the
  1960's.

  Perhaps the techniques developed here for computing with the presentation provided here can serve as tools for the construction of useful new
  long exact sequences of $K$-groups.

  I intend to settle the conjecture \ref{conj1} in a future paper.

\section*{Acknowledgments}

  I thank the University of Bielefeld (where much of this work was done) for its hospitality, the National Science Foundation for support under
  grant NSF DMS 10-02171, and the Oswald Veblen Fund and the Bell Companies Fellowship for supporting my stay at the Institute for Advanced
  Study in 2013-2014, where the paper was written.


\section{Basic constructions}

  In \cite[Definition 1.4]{grayson-MR2947948} we defined what it means for an exact category $\cN$ to {\em support long exact sequences}, what a
  {\em long exact sequence} of $\cN$ is \cite[Definition 1.1]{grayson-MR2947948}, and what a {\em quasi-isomorphism} of $\cN$ is
  \cite[Definition 2.6]{grayson-MR2947948}.  Any exact category can be converted to one that supports long exact sequences by adding objects
  representing images of idempotent maps, and the conversion changes only $K_0 \cN$.  The advantage of working in such a category is that all
  the usual statements from homological algebra about exactness of bounded chain complexes hold \cite[text after Definition
    1.4]{grayson-MR2947948}.  Without further comment, we assume that the exact categories mentioned here all support long exact sequences.  In
  \cite{grayson-MR2947948} are various results that state that the constructions we use for making new exact categories from old ones (for
  example, categories of chain complexes or binary chain complexes) preserve the property of supporting long exact sequences; similar statements
  hold for the slightly more advanced constructions of this paper.

  We let $\Gr\cN$ denote the exact category of bounded $\ZZ$-graded objects $N$ of $\cN$.  As in \cite[Definition 2.1, Remark
    2.2]{grayson-MR2947948}, for an exact category $\cN$, we let $C \cN$ denote the exact category of bounded chain complexes $(N,d)$ of objects
  of $\cN$, and as in \cite[Definition 3.1]{grayson-MR2947948} we let $B \cN$ denote the exact category of bounded binary chain complexes
  $(N,d,d')$ of objects of $\cN$; here $d$ is the {\em top} differential, and $d'$ is the {\em bottom} differential.  We also introduce
  $C^2\cN$ as an abbreviation for $C\cN^2$, the category of pairs $((N,d),(N',d'))$ of pairs of chain complexes of objects of $\cN$, because
  its use alternates so often with the use of $B\cN$.

  We introduce the following exact functors.

  \begin{mathparpagebreakable}
    \xymatrixrowsep {5pt}
    \xymatrix{C \cN \ar@{->}[r]^\gr & \Gr \cN \\ (N,d) \ar@{|->}[r] & N } \and
    \xymatrix{B \cN \ar@{->}[r]^\gr & \Gr \cN \\ (N,d,d') \ar@{|->}[r] & N } \\
    \xymatrix{C \cN \ar@{->}[r]^\Delta & C^2 \cN \\ (N,d) \ar@{|->}[r] & ((N,d),(N,d)) } \and
    \xymatrix{C \cN \ar@{->}[r]^\Delta & B \cN \\ (N,d) \ar@{|->}[r] & (N,d,d) } \\
    \xymatrix{B \cN \ar[r]^\Top & C \cN \\ (N,d,d') \ar@{|->}[r] & (N,d) } \and
    \xymatrix{B \cN \ar[r]^\Bot & C \cN \\ (N,d,d') \ar@{|->}[r] & (N,d') } 
    \xymatrix{B \cN \ar@{->}[r]^\topbot & C^2 \cN \\ (N,d,d') \ar@{|->}[r] & ((N,d),(N,d')) } \\
    \xymatrix{C^2 \cN \ar@{->}[r]^\sw & C^2 \cN \\ ((N,d),(N',d')) \ar@{|->}[r] & ((N',d'),(N,d)) } \and
    \xymatrix{B \cN \ar@{->}[r]^\sw & B \cN \\ (N,d,d') \ar@{|->}[r] & (N,d',d) }
  \end{mathparpagebreakable}

An object in the image of a functor $\Delta$ is called {\em diagonal}.

When it is likely to cause little confusion, we identify an object of $B\cN$ with its image under $\topbot$.  But beware: the faithful functor
$\topbot : B\cN \to C^2\cN$ is not full, because a map $(M,c,c') \to (N,d,d')$ in $B\cN$ is a single map $M \to N$ in $\Gr \cN$ that commutes
with both differentials.

In any of our exact categories, we let $i$ denote the subcategory of isomorphisms, and in any of our exact categories of chain complexes (or
binary chain complexes, etc.), we let $q$ denote the subcatgory of quasi-isomorphisms.  As explained in the second paragraph of \cite[Section
  4]{grayson-MR2947948}, the map $K \cN \to Kq C \cN$ arising from the embedding $\cN \hookrightarrow C \cN$ is a homotopy equivalence of
spectra.

Now we pass to relative $K$-theory for an exact functor $F : \cM \to \cN$ between exact categories, and we introduce several definitions.

\begin{definition}\label{CF}
  Let $C[F]$ be the exact category of triples $X = (M,N,u)$, where $M \in C\cM$, $N \in C\cN$, and $u : FM \xrightarrow\sim N$ is a
  quasi-isomorphism of $C \cN$.  A map $X \to X'$ is a pair $(f,g)$ consisting of maps $f : M \to M'$ and $g : N \to N'$ such that $u' \circ Ff
  = g \circ u$.  We call $u$ the {\em comparison} map. We introduce the notations $X_\src \defeq M$, $X_\tar \defeq N$, and $X_\qi \defeq u$.
\end{definition}

\begin{definition}\label{BF}
  Let $B[F]$ be the exact category of triples $X = (M,N,u)$, where $M \in C^2\cM$, $N \in B\cN$, and $u : FM \xrightarrow\sim \topbot N$ is a
  quasi-isomorphism of $C^2 \cN$.  We call $u$ the {\em comparison} map. A map $(M,N,u) \to (M',N',u')$ is a pair $(f,g)$ consisting of maps $f
  : M \to M'$ in $C^2\cM$ and $g : N \to N'$ in $B\cN$ such that $u' \circ Ff = \topbot g \circ u$.  We introduce the notations $X_\src \defeq
  M$, $X_\tar \defeq N$, $X_\qi \defeq u$ and the following exact functors.
  \begin{mathparpagebreakable}
    \xymatrixrowsep {7pt}
      \xymatrix{C[F] \ar@{->}[r]^\Delta & B[F] \\ (M,N,u) \ar@{|->}[r] & (\Delta M,\Delta N,\Delta u) } \and
      \xymatrix{B[F] \ar@{->}[r]^\Top & C[F] \\ (M,N,u) \ar@{|->}[r] & (\Top M,\Top N,\Top u) } \and
      \xymatrix{B[F] \ar@{->}[r]^\Bot & C[F] \\ (M,N,u) \ar@{|->}[r] & (\Bot M,\Bot N,\Bot u) } \and
      \xymatrix{B[F] \ar@{->}[r]^\topbot & C[F^2] \\ (M,N,u) \ar@{|->}[r] & (M,\topbot N,u) } \and
      \xymatrix{B[F] \ar@{->}[r]^\sw & B[F] \\ (M,N,u) \ar@{|->}[r] & (\sw M,\sw N,\sw u) }
  \end{mathparpagebreakable}
  Here $F^2 \defeq F \times F : \cM^2 \to \cN^2$, and we freely identify such categories as $C(\cM^2)$ and $(C\cM)^2$.
\end{definition}

For $X \in C[F]$ or $X \in B[F]$ we say that $X$ is {\em acyclic} if $X_\src$ and $X_\tar$ are acyclic.

\begin{definition}\label{pCF}
  Let $p \subseteq C[F]$ be the subcategory whose arrows $f : X \to X'$ are those where $f_\src : X_\src \xrightarrow \sim X'_\src$ is a
  quasi-isomorphism of $C\cM$ and $f_\tar : X_\tar \xrightarrow \isom X'_\tar$ is an isomorphism of $C\cN$.  Identifying $f$ with the pair
  $(f_\src,f_\tar) \in q \times i$ suggests the mnemonic notation $(q,i)$ for $p$.
\end{definition}

Observe that $p C[F]$ is an exact category with weak equivalences, in the sense of \cite[Definition A.1]{grayson-MR2947948}.  The category $p$
does not satisfy the cylinder axiom of Waldhausen \cite[1.6]{waldhausen-MR802796}, because the rear projection of a mapping cylinder in $C\cN$
is not an isomorphism.

\begin{definition}\label{pBF}
  Let $p \defeq (q,i) \subseteq B[F]$ be the subcategory whose arrows $f : X \to X'$ are those where $f_\src : X_\src \xrightarrow \sim X'_\src$ is a
  quasi-isomorphism of $C^2\cM$ and $f_\tar : X_\tar \xrightarrow \isom X'_\tar$ is an isomorphism of $B\cN$.
\end{definition}

Observe that $p B[F]$ is an exact category with weak equivalences and that $\Delta : pC[F] \to pB[F]$ is an exact functor.  The category $p$
does not satisfy the cylinder axiom of Waldhausen.

A helpful mnemonic may be to view $p B[F]$ as a sort of homotopy pullback of the diagram
\[\xymatrix{
                  & iB\cN \ar @{->} [d]^\topbot \\
 qC^2\cM \ar[r]^F  & qC^2\cN 
}
\]

\begin{definition}\label{omF}
  Let $\Om[F]$ denote the pair $ [ pB[F] \xleftarrow {\Delta[F]} pC[F] ]$ of exact categories with weak equivalences, where {\em pair} has
  the meaning adopted in \cite[Definition 4.1]{grayson-MR2947948} and in the text after \cite[Definition A.3]{grayson-MR2947948}.  (We point the
  arrow leftward to emphasize the role of its target.)
\end{definition}

A helpful mnemonic may be to view $\Omega[F]$ as a sort of homotopy pullback of the diagram
\[\xymatrix{
                  & [ iB\cN \xleftarrow \Delta iC\cN ] \ar @{->} [d]^{(\topbot,1)} \\
 [ qC^2\cM \xleftarrow \Delta qC\cM ] \ar[r]^F  & [ qC^2\cN \xleftarrow \Delta qC\cN ]
}
\]

\begin{conjecture}\label{conj1}
  Passing to $K$-theory gives the following homotopy pullback square.
  \[\xymatrix{
   K \Omega[F] \ar[r] \ar[d]                                  & K [ iB\cN \xleftarrow \Delta iC\cN ] \sim * \ar @{->} [d]^{(\topbot,1)} \\
   K\cM \sim K [ qC^2\cM \xleftarrow \Delta qC\cM ] \ar[r]^{KF}  & K [ qC^2\cN \xleftarrow \Delta qC\cN ] \sim K\cN
  }
  \]
\end{conjecture}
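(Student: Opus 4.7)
Since the upper-right corner is contractible, the conjecture is equivalent to the assertion that the left column
\[ K\Omega[F] \to K\cM \xrightarrow{KF} K\cN \]
is a homotopy fiber sequence of spectra. My plan is to establish this fiber-sequence form directly, generalizing the absolute presentation of $K\cN$ from \cite{grayson-MR2947948} to the relative setting. The guiding model is Waldhausen's construction of relative $K$-theory of an exact functor as the $K$-theory of a category of mapping cylinders, combined with his generic fibration theorem \cite[1.6.4]{waldhausen-MR802796}; here the pair $\Omega[F] = [pB[F] \xleftarrow{\Delta[F]} pC[F]]$ plays the role of ``binary acyclic complexes relative to $F$, modulo diagonals,'' exactly as in the absolute case.

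The anticipated key steps are: (i) reinterpret each of the three corners other than $K\Omega[F]$ as the $K$-theory of a pair, using the identifications $K\cM \sim K[qC\cM^2 \xleftarrow{\Delta} qC\cM]$ and $K\cN \sim K[qC\cN^2 \xleftarrow{\Delta} qC\cN]$ extracted from the presentation theorem of \cite{grayson-MR2947948}, together with the contractibility $K[iB\cN \xleftarrow{\Delta} iC\cN] \sim *$ (which reflects the existence of the retraction $\Top : B\cN \to C\cN$ together with a suitable homotopy between $\Delta \circ \Top$ and the identity of $iB\cN$ visible at the level of the classifying space, no acyclicity having been imposed); (ii) prove a fibration theorem for $K$ of pairs that is strong enough to identify $K\Omega[F]$ with the homotopy fiber of the right-hand vertical map of the square; and (iii) match the boundary map of the resulting long exact sequence with the boundary map constructed elementarily in the body of the paper, so that the $5$-lemma reduces the full comparison to a check in low degrees, where one can compare generators and relations by hand.

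The principal obstacle will be Waldhausen's cylinder axiom, which the paper explicitly notes fails for $p$ because the rear projection of a mapping cylinder in $C\cN$ is not an isomorphism. My plan to sidestep this is to enlarge $pB[F]$ and $pC[F]$ by formally adjoining the missing mapping cylinders, prove the fibration theorem in the enlargement, and then verify via an approximation argument that the enlargement is $K$-theoretically invisible, each adjoined cylinder being quasi-isomorphic to an object already present. Assembling the resulting fiber sequence with the three identifications of step (i) then yields the homotopy pullback square as claimed; the delicate point, and the one most likely to require genuinely new input, is the approximation step needed to certify that enlarging by cylinders does not alter $K\Omega[F]$.
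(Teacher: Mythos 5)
First, a point of comparison: the paper contains no proof of this statement at all. It is stated as a conjecture, the introduction explicitly defers settling it to a future paper, and the only ingredients established are the contractibility of the upper-right corner (cited from the proof of Theorem 4.3 of \cite{grayson-MR2947948}) and homotopy-commutativity of the square via the comparison maps $u$. So there is nothing in the paper for your proposal to agree or disagree with; what has to be judged is whether your plan would actually close the conjecture, and as it stands it would not. Your reduction to the assertion that $K\Omega[F] \to K\cM \to K\cN$ is a fiber sequence is fine, and citing the contractibility of $K[iB\cN \xleftarrow{\Delta} iC\cN]$ is fine (though your parenthetical explanation of it is not how that contractibility is obtained -- it comes from a filtration/additivity argument, not from a homotopy between $\Delta\circ\Top$ and the identity). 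The genuine gap is in your proposed repair of the cylinder-axiom failure. The problem is not that mapping cylinders are ``missing'' as objects: $C[F]$ and $B[F]$ already contain the cylinders, and the paper says exactly where the failure lies -- the rear projection of a cylinder is a quasi-isomorphism on the target component but not an isomorphism, hence is not in $p=(q,i)$. Fixing that means enlarging the class of weak equivalences, not adjoining objects, and such an enlargement is emphatically not ``$K$-theoretically invisible'': already in the absolute case every chain map between acyclic complexes is a quasi-isomorphism, so relaxing $i$ to $q$ in the target coordinate makes every object weakly equivalent to $0$ and collapses the $K$-theory of the pair, destroying precisely the $K_1$-type information the construction is designed to capture. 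So the approximation step you flag as ``most likely to require genuinely new input'' is not a technical loose end; it is the entire content of the conjecture, and the route through Waldhausen's generic fibration theorem \cite[1.6.4]{waldhausen-MR802796} is blocked at exactly this point.

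Two further soft spots: in step (i), the identifications $K\cM \sim K[qC\cM^2 \xleftarrow{\Delta} qC\cM]$ and likewise for $\cN$ are not literally the presentation theorem of \cite{grayson-MR2947948} (which concerns acyclic binary complexes and $\Omega\cN$); they require their own argument that the relative $K$-theory of $\Delta: qC\cM \to qC\cM^2$ recovers $K\cM$, which is plausible but must be supplied. And in step (iii), invoking the $5$-lemma presupposes that you have already constructed a map of long exact sequences compatible with the elementary boundary map of this paper; producing that compatibility is itself a nontrivial comparison, not a routine check in low degrees. In short, the proposal is a reasonable sketch of a strategy, but each of its three steps currently rests on an unproved assertion, and the central one (the fibration theorem for $p=(q,i)$ without the cylinder axiom) is exactly what the author left open.
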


The upper right hand corner in the square above was shown to be contractible in \cite[proof of Theorem 4.3]{grayson-MR2947948}.  Commutativity
of the square up to homotopy is provided by the maps $u$, which are in $q$.

Recalling the definition \cite[Definition A.4]{grayson-MR2947948}, observe that the Grothendieck group $K_0 \Om[F]$ is the quotient of $K_0
B[F]$ that equates objects in the image of $\Delta[F]$ to $0$ and equates the source and the target of maps in $p$.  Alternatively, and better
for our purposes, is to take that description as the definition of $K_0 \Om[F]$.

The following theorem is our main result; the proof will be given later.

\begin{theorem}\label{exseqK00000}
  Given an exact functor $F : \cM \to \cN$ between exact categories that support long exact sequences, the sequence
  \[ \xymatrix {
    &
  K_0\Om[0 \to \cM] \ar[r] &
  K_0\Om[0 \to \cN] \ar[r] &
  K_0\Om[\cM \xrightarrow F \cN] \ar `r[d] `[l] `[llld] `[dll]  [dll] \\
    &
  K_0\Om[\cM \to 0] \ar[r] &
  K_0\Om[\cN \to 0] &
  } \] 
  of abelian groups induced by the maps of pairs
  \[
    [0 \to \cM] \xrightarrow {(1,F)}
    [0 \to \cN] \xrightarrow {(0,1)}
    [\cM \xrightarrow F \cN] \xrightarrow{(1,0)}
    [\cM \to 0] \xrightarrow {(F,1)}
    [\cN \to 0] 
  \]
  is exact.
\end{theorem}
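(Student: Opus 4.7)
The plan is to verify exactness of the five-term sequence at each of its three middle terms. I would first unpack the four outer groups using the identifications implicit in \cite{grayson-MR2947948}: the two rightmost groups $K_0\Om[\cM \to 0]$ and $K_0\Om[\cN \to 0]$ are naturally $K_0\cM$ and $K_0\cN$ via the Euler-characteristic difference $[(A_1, A_2)] \mapsto [A_1] - [A_2]$, while the two leftmost groups $K_0\Om[0 \to \cM]$ and $K_0\Om[0 \to \cN]$ are Grayson's presentations of $K_1\cM$ and $K_1\cN$ by doubly acyclic binary complexes modulo diagonals. Under these identifications the maps $\alpha$ and $\delta$ both become $F_*$ (on $K_1$ and $K_0$ respectively), $\beta$ becomes $[N] \mapsto [(0,N,0)]$, and $\gamma$ becomes $[(M,N,u)] \mapsto [M_1] - [M_2]$.

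Next I would verify that the three compositions of consecutive arrows vanish. The composition $\gamma\beta = 0$ is immediate. For $\delta\gamma = 0$, given $(M,N,u) \in B[F]$ the componentwise quasi-isomorphisms $u_i \colon FM_i \xrightarrow\sim (N,d_i)$ give $[FM_i] = \chi(N_\bullet)$ in $K_0\cN$ independently of $i$, so $F_*([M_1] - [M_2]) = 0$. For $\beta\alpha = 0$, given a doubly acyclic $A \in B\cM$, I would first use the $p$-morphism $(0, FA, 0) \to (\topbot A, FA, 1_{FA})$, whose source map $0 \hookrightarrow \topbot A$ is a quasi-isomorphism because $A$ is doubly acyclic, to rewrite $[(0, FA, 0)]$ as $[(\topbot A, FA, 1)]$ in $K_0\Om[F]$; I then express this class as a combination of $\Delta[F]$-images built from the chain complexes $\Top A$ and $\Bot A$, using a short exact sequence decomposition in $B[F]$ that trades the asymmetry of $\topbot A$ against the two differentials on $FA$, together with a Grayson-style swap identity $[\sw X] = -[X]$ in $K_0\Om[F]$ (proved by noting that $X \oplus \sw X$ is isomorphic to a diagonal via the basis-swap permutation).

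Then I would verify each reverse inclusion $\ker \subseteq \im$. For exactness at $K_0\cM$, given $[P] \in K_0\cM$ with $F_*[P] = 0$ in $K_0\cN$, I unpack the $K_0\cN$-equality $[FP_1] = [FP_2]$ as a finite stable identification realized by short exact sequences in $\cN$, assemble this data into a binary complex $N \in B\cN$ together with a comparison map $u$, and check $\gamma[(P, N, u)] = [P]$. For exactness at $K_0\Om[F]$, given $(M,N,u)$ with $[M_1] - [M_2] = 0$ in $K_0\cM$, I use the relations in $K_0\cM$ to successively modify $M$ via short exact sequences and $p$-equivalences in $B[F]$ into an equivalent object whose source is zero, producing a representative of the form $(0, N', 0) \in \im\beta$.

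The main obstacle will be exactness at $K_1\cN$: given doubly acyclic $N \in B\cN$ with $[(0, N, 0)] = 0$ in $K_0\Om[F]$, produce doubly acyclic $A \in B\cM$ with $[FA] = [N]$ in $K_1\cN$. The vanishing of $[(0, N, 0)]$ is witnessed by a finite list of short exact sequences, $\Delta[F]$-diagonalizations, and $p$-morphisms in $B[F]$, and the task is to extract from these a doubly acyclic binary complex in $B\cM$ whose $F$-image equals $N$ modulo Grayson's diagonals-and-isomorphisms relations in $K_1\cN$. I expect the hard part to be the systematic bookkeeping, analogous to the snake-lemma construction in classical homological algebra: showing that every relation in $K_0\Om[F]$ involving a zero source component of $B[F]$ can be pushed down to a relation in $B\cN$ arising as the $F$-image of a corresponding relation in $B\cM$, and that the residual ambiguity in this pushdown is exactly what is killed by the relations defining $K_1\cN$.
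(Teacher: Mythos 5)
Your skeleton matches the paper's: identify the four outer groups via \ref{relKspecial}, check the sequence is a complex, then prove exactness at the three middle spots; and your plan for exactness at $K_0\Om[\cM\to 0]\isom K_0\cM$ is essentially the paper's Lemma \ref{part1}, whose real content is the Grothendieck-group fact \ref{K0lemmacoro} (equality $[FM]=[FM']$ in $K_0\cN$ is witnessed by two exact sequences $0\to FM\to N^0\to N^1\to N^2\to 0$ and $0\to FM'\to N^0\to N^1\to N^2\to 0$ sharing all terms but the first), which your ``finite stable identification'' would have to be made into and proved. The complex property is fine either way (the paper gets it from $K_0\Om[1_\cM]=0$ plus functoriality; in your $\beta\alpha=0$ argument no swap identity is needed -- after the $p$-map $(0,FA,0)\to(\topbot A,FA,1)$ the degree-wise filtration has subquotients concentrated in a single degree, hence diagonal).

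The genuine gaps are at the two remaining spots. At $K_0\Om[F]$, ``successively modify $M$ via short exact sequences and $p$-equivalences into an object whose source is zero'' names no mechanism: $p$ only changes the source along quasi-isomorphisms, and $\Top M$, $\Bot M$ are in general not acyclic, so you cannot contract the source away directly. The paper needs (i) the shift relation \ref{K0omegashift} to reduce to a single generator, (ii) stabilization by an acyclic pair so that $\gr\Top M\isom\gr\Bot M$ (an appeal to Lemma 5.4 of the cited paper), which makes $M=\topbot L$ for some $L\in B\cM$, and (iii) a mapping-cone computation showing $[(M,N,u)]=[(0,\Cone'(u),0)]$ modulo classes killed by diagonal filtrations. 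More seriously, at $K_0\Om[0\to\cN]\isom K_1\cN$ your proposal only restates the problem: ``a finite list of relations plus systematic bookkeeping'' is not an argument, and the proposed pushdown is wrong in spirit, because the relations witnessing $[(0,N,0)]=0$ in $K_0\Om[F]$ involve objects with arbitrary, non-acyclic source components, which are not $F$-images of relations in $B\cM$. What the paper actually uses is a structural characterization of vanishing in the relative $K_0$ (Lemma \ref{K0lemmaimproved}, with its hypothesis on filtrations of cones), a toolkit of $K_1$ identities (\ref{K1shift}, \ref{K1autoadd}, \ref{K1topbotiso}, \ref{K1diffrot}), the bridge Lemma \ref{TotBCF} giving $F[\Tot E_\src]=[\Tot E_\tar]$ for $E\in BC[F]$ with acyclic total complex, and then an explicit construction (the complexes $A$, $A'$, $B$ with signed permutation isomorphisms) to land in the image of $K_1\cM$. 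None of these ingredients appear in your plan, so the crucial exactness at $K_1\cN$ -- and with it the theorem -- remains unproven.
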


Recall the notations $B^q\cN$ and $C^q\cN$ for the subcategories of acyclic objects in $B\cN$ and $C\cN$ respectively, introduced in the last
paragraphs of \cite[Sections 2 and 3]{grayson-MR2947948}.  Recall also the notation $\Om \cN \defeq [iB^q\cN \xleftarrow \Delta iC^q\cN]$,
which was introduced in \cite[Definition 4.2]{grayson-MR2947948}.

\begin{lemma}\label{relKspecial}
  Let $\cN$ be an exact category.  Then $K_0\Om[\cN \to 0] \isom K_0 \cN$ and $K_0\Om[0 \to \cN] \isom K_1 \cN$.
\end{lemma}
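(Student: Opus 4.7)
The plan is to unwind the definitions of $C[F]$, $B[F]$, $p$, and $\Delta[F]$ in the two special cases, which reduces the first assertion to a short calculation with Euler characteristics and the second to an invocation of the presentation of $K_1$ from \cite{grayson-MR2947948}.

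For $F \colon \cN \to 0$, the second coordinate of a triple $(M,N,u)$ in $C[F]$ or $B[F]$ lies in $C(0) = 0$ or $B(0) = 0$, so it is the zero object and the comparison map is trivially a quasi-isomorphism. Thus $C[\cN \to 0] \isom C\cN$ and $B[\cN \to 0] \isom C\cN^2$, with $\Delta[F]$ reducing to the ordinary diagonal $M \mapsto (M,M)$. Since the condition that $f_\tar$ be an isomorphism is vacuous, the subcategory $p$ becomes $q$ on each side, so $\Om[\cN \to 0] = [qC\cN^2 \xleftarrow \Delta qC\cN]$. Using the description of $K_0 \Om[F]$ as the quotient of $K_0(pB[F])$ by the image of $K_0 \Delta[F]$, together with the Euler-characteristic isomorphism $K_0(qC\cN) \isom K_0\cN$ (valid because $\cN$ supports long exact sequences), I obtain $K_0(qC\cN^2) \isom K_0\cN \oplus K_0\cN$ and then divide out by the diagonal subgroup to get $K_0\cN$, with the isomorphism given by $[(M_1,M_2)] \mapsto \chi M_1 - \chi M_2$.

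For $F \colon 0 \to \cN$, the first coordinate is zero, so the comparison map forces the second coordinate to be acyclic. Thus $C[0 \to \cN] \isom C^q \cN$ and $B[0 \to \cN] \isom B^q\cN$, and since the condition on $f_\src$ is vacuous the subcategory $p$ reduces to $i$ on each side. Hence $\Om[0 \to \cN]$ matches the pair $\Om\cN = [iB^q\cN \xleftarrow \Delta iC^q\cN]$ of \cite[Definition 4.2]{grayson-MR2947948}, and the desired isomorphism $K_0 \Om\cN \isom K_1\cN$ is the $n = 1$ case of the presentation established there, which is the definition of $K_1\cN$ adopted in this paper. The main obstacle is really only bookkeeping: I must verify that the identifications of the triples $(M,N,u)$ with the simpler categories respect both the exact structure and the weak equivalence structure $p$, after which both halves reduce to results already in hand.
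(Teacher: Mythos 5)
Your proposal is correct and follows essentially the same route as the paper: unwind the definitions so that $\Om[\cN \to 0]$ becomes $[qC\cN^2 \xleftarrow{\Delta} qC\cN]$, whose $K_0$ is the cokernel of the diagonal and hence $K_0\cN$, and so that $\Om[0 \to \cN]$ is identified with $\Om\cN = [iB^q\cN \xleftarrow{\Delta} iC^q\cN]$, whence $K_1\cN$ by the cited result of \cite{grayson-MR2947948} (the paper invokes Corollary 7.2 there). The bookkeeping you flag (the comparison map being trivially a quasi-isomorphism in one case and forcing acyclicity in the other, and $p$ collapsing to $q$ or $i$) is exactly the content of the paper's proof.
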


\begin{proof}
  Observe that $K_0 \Om[\cN \to 0] \isom K_0 [ qC^2\cN \xleftarrow \Delta qC\cN ] \isom \coker ( K_0 qC^2\cN \xleftarrow \Delta K_0 qC\cN ) \isom
  K_0 qC\cN \isom K_0 \cN$.  

  An object $(0,N,u) \in B[0 \to \cN]$ is essentially the same thing as an object $N \in B^q\cN$, because the unique arrow $u : 0 \to \topbot N$
  is a quasi-isomorphism if and only if $N \in B^q\cN$.  Similarly for an object of $C[0 \to \cN]$.  Observe then that $K_0\Om[0 \to \cN] \isom
  K_0[i B^q\cN \xleftarrow \Delta i C^q\cN] \isom K_0 \Om \cN \isom K_1 \cN$; the final isomorphism of the chain is provided by \cite[Corollary
    7.2]{grayson-MR2947948}.
\end{proof}

For our current purpose, we may adopt either $K_0\Om[0 \to \cN]$ or $K_0\Om\cN$ as the definition of $ K_1 \cN $, and we identify a class
$[(0,N,0)] \in K_0\Om[0 \to \cN]$ with the corresponding class $[N] \in K_1 \cN$.

\begin{corollary}\label{exseqK10}
  Given an exact functor $F : \cM \to \cN$ between exact categories that support long exact sequences, there is an exact sequence 
  \[ \xymatrix {
    K_1 \cM \ar[r] &
    K_1 \cN \ar[r] &
    K_0\Om[F] \ar[r] & 
    K_0 \cM \ar[r] &
    K_0 \cN & 
  } \]
\end{corollary}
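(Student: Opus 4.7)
The plan is to derive this from Theorem \ref{exseqK00000} by translating four of its five terms via Lemma \ref{relKspecial}. First, apply Theorem \ref{exseqK00000} to the exact functor $F$ to obtain the five-term exact sequence
\[
K_0\Om[0 \to \cM] \to K_0\Om[0 \to \cN] \to K_0\Om[F] \to K_0\Om[\cM \to 0] \to K_0\Om[\cN \to 0].
\]

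Next, invoke Lemma \ref{relKspecial} (for both $\cM$ and $\cN$) to obtain the identifications $K_0\Om[0 \to \cM] \isom K_1 \cM$, $K_0\Om[0 \to \cN] \isom K_1 \cN$, $K_0\Om[\cM \to 0] \isom K_0 \cM$, and $K_0\Om[\cN \to 0] \isom K_0 \cN$. Substituting these into the sequence above yields a five-term exact sequence with the terms claimed in the corollary.

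The remaining step is to verify that, under these identifications, the first and last arrows are respectively $K_1F$ and $K_0F$; the middle two arrows are then the connecting maps by definition. For the first arrow, the map of pairs $(1,F) : [0 \to \cM] \to [0 \to \cN]$ sends a generator $[(0,N,0)]$ with $N \in B^q\cM$ to $[(0,FN,0)]$ with $FN \in B^q\cN$ (since exact functors preserve acyclicity); tracing through the chain $K_0\Om[0 \to \cN] \isom K_0\Om\cN \isom K_1\cN$ appearing in the proof of Lemma \ref{relKspecial} shows that this is precisely the map $[N] \mapsto [FN]$, i.e.\ $K_1F$. The argument for $(F,1) : [\cM \to 0] \to [\cN \to 0]$ inducing $K_0F$ is entirely analogous and in fact simpler.

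All the substantive content lies in Theorem \ref{exseqK00000} itself, which is taken as given at this point; the corollary is then an elementary translation, and there is no real obstacle beyond checking that the induced maps are the expected ones.
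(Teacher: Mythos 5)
Your proposal is correct and matches the paper's (implicit) argument: the corollary is obtained exactly by substituting the identifications of Lemma \ref{relKspecial} into the five-term sequence of Theorem \ref{exseqK00000}, with the paper's preceding remark fixing the identification $[(0,N,0)] \leftrightarrow [N]$ that you verify is compatible with the induced maps. Your extra check that the outer arrows become $K_1F$ and $K_0F$ is a harmless (and useful) refinement beyond what the unlabeled arrows of the statement require.
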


\begin{remark}
  An explicit computation shows that, in general, $K_0\Om[F]$ is not isomorphic to $\coker (K_1 \cM \to K_1 \cN) \oplus \ker (K_0 \cM \to K_0
  \cN)$.  It compares the exact sequence above to Quillen's localization sequence for the embedding of a Dedekind domain with nontrivial ideal
  class group into its field of fractions.
\end{remark}

\section{The long exact sequence}

To extend the long exact sequence of \ref{exseqK10} to the left, we introduce the notion of {\em split cubes} of exact categories.

\begin{definition}
  A pair $[f]$ in a category $\cC$ is {\em split} if there is a map $g$ (called the {\em splitting}) with $g \circ f = 1$.  A {\em split
    $0$-cube} of $\cC$ is an object of $\cC$.  For $n \in \NN$ a {\em split $n+1$-cube} is a split pair $[C' \to C]$ in the category of split
  $n$-cubes of $\cC$.  A {\em split $n$-cube} of exact categories is a split $n$-cube in the category of exact categories.
\end{definition}

Recall from \cite[Section 7, first paragraph]{grayson-MR2947948} that the functor $\Om : \cN \mapsto \Om \cN$ can be iterated $n$ times,
yielding a functor $\Om^n : \cN \mapsto \Om^n \cN$, where $\Om^n \cN$ is regarded here as a split $n$-cube of exact categories.  It is split
because the functor $\Delta$ is split by the functor $\Top$.

Since the functor $\Top$ is natural, the functor $\Om$ can be viewed as a split pair of functors from exact categories to exact categories, and
thus $\Om^n$ can be viewed as a split $n$-cube of such functors.  In particular, if $[\cM \to \cN]$ is a pair of exact categories, then $\Om^n [
  \cM \xrightarrow F \cN] \defeq [ \Om^n\cM \xrightarrow F \Om^n\cN]$ is a split $n$-cube of pairs of exact categories -- that is a stronger
condition than being a pair of split $n$-cubes, since the splittings are not part of the data incorporated into a cube.

Observe that if $G : \cC \to \Ab$ is a functor from a category $\cC$ to the category $\Ab$ of abelian groups, then we can extend it to a functor
on split $n$-cubes in $\cC$, by induction on $n$, by defining $G[C' \to C] \defeq \coker (G C' \to G C) $.  Since a direct summand of an exact
sequence is exact, an exact sequence $G' \to G \to G''$ of functors $\cC \to \Ab$ remains exact when extended to a sequence of functors on split
$n$-cubes of $\cC$.

If $C$ is a split $n$-cube of exact categories and $m \in \NN$, then the discussion above gives a meaning to $K_m C$, as does the notion of
multi-relative $K$-theory of a cube of exact categories, discussed in \cite[Appendix]{grayson-MR2947948}.  The two meanings agree, because the
long exact sequence of relative $K$-theory for a split pair $[F : \cM \to \cN]$ splits into a collection of split short exact sequences $ 0 \to
K_m \cM \to K_m \cN \to K_m [F] \to 0$, demonstrating that the relative $K$-group $K_m [F]$ is isomorphic to $\coker ( K_m \cM \to K_m \cN )$.
Since the meanings agree, we may allow the ambiguity in notation, as in the statement of the following lemma.  For those readers interested only
in the elementary content of this paper, the second meaning can be ignored.

\begin{lemma}\label{Kmn}
  Given an exact category $\cN$ with $m \in \NN$ and $n \in \NN$, there is a natural isomorphism $K_m \Om^n \cN \isom K_{m+n}\cN$.
\end{lemma}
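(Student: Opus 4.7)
The key input will be the natural isomorphism $K_k \cE \isom K_0 \Om^k \cE$ from \cite{grayson-MR2947948}, valid for any exact category $\cE$ and $k \in \NN$; the case $k=1$ is Corollary 7.2 (already invoked in the proof of Lemma \ref{relKspecial}), and the general case is obtained there by iteration. The plan is to reduce $K_m \Om^n \cN$ to $K_0 \Om^{m+n} \cN$, at which point this cited isomorphism (now with $k=m+n$) closes out the proof.

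The main step will be to extend the vertex-level isomorphism $K_m \cE \isom K_0 \Om^m \cE$ from exact categories to split cubes. I claim: for every split $n$-cube $D$ of exact categories and every $m \in \NN$,
\[
K_m D \isom K_0 \Om^m D,
\]
where $\Om^m D$ denotes the split $(n+m)$-cube obtained by applying $\Om^m$ node-wise to $D$. I would prove this by induction on $n$. The base case $n=0$ is the cited isomorphism for exact categories. For the inductive step, I would write $D = [C' \to C]$ as a split pair of $(n-1)$-cubes and compute
\[
K_m D \;=\; \coker(K_m C' \to K_m C) \;\isom\; \coker(K_0 \Om^m C' \to K_0 \Om^m C) \;=\; K_0 \Om^m D,
\]
using the induction hypothesis on $C'$ and $C$ together with the naturality of the vertex isomorphisms (so that the induced map on cokernels is again an isomorphism).

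Applying this claim with $D = \Om^n \cN$ would then give $K_m \Om^n \cN \isom K_0 \Om^m(\Om^n \cN) = K_0 \Om^{m+n} \cN$, where the second equality is the associativity $\Om^m \Om^n = \Om^{m+n}$ of the iteration on cubes. A final invocation of the cited isomorphism, now with $k = m+n$, yields $K_0 \Om^{m+n} \cN \isom K_{m+n} \cN$, which gives the desired conclusion. The main obstacle will be the bookkeeping of cube structures in the inductive step: one must verify that $\Om^m$ applied node-wise to an $n$-cube really produces an $(n+m)$-cube with the correct splittings, that $\Om^m \Om^n \cN = \Om^{m+n} \cN$ holds on the nose as split $(m+n)$-cubes, and that all maps and splittings align so that the iterated cokernels on the two sides of the claimed isomorphism compare correctly. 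These verifications are routine but notationally delicate and require attention to which coordinate directions are being added at each stage.
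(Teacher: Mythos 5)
Your argument is sound, but it is not the route the paper takes. The paper's official proof is homotopy-theoretic: it reads $K_m \Om^n \cN$ as $\pi_m$ of the multi-relative $K$-theory spectrum of the split $n$-cube $\Om^n\cN$ and invokes \cite[Corollary 7.1]{grayson-MR2947948}, which identifies that spectrum with $V^0 \Om^n K\cN$ (the $0$-th Postnikov stage of the $n$-fold loop spectrum), so that its $\pi_m$ is $K_{m+n}\cN$. Your route stays entirely at the level of Grothendieck groups: cite the presentation theorem $K_k\cE \isom K_0\Om^k\cE$ at each vertex, propagate it across split cubes by induction on the cube dimension (naturality plus the elementary fact that a commuting square with isomorphisms on both terms induces an isomorphism of cokernels), and finish with $\Om^m\Om^n\cN = \Om^{m+n}\cN$. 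This is essentially the elementary variant the paper itself sketches in the paragraph immediately after its proof, where $K_n\cN \defeq K_0\Om^n\cN$ is taken as a definition; your version upgrades that sketch to a statement about Quillen's $K$-groups by citing the main theorem of \cite{grayson-MR2947948} rather than adopting a definition. The paper's proof buys brevity and directly treats the multi-relative (spectrum-level) meaning of $K_m\Om^n\cN$; yours avoids spectra altogether, in keeping with the paper's elementary aims, at the cost of the cube bookkeeping you flag (order-independence of iterated cokernels, $\Om^m$ applied node-wise to an $n$-cube giving a split $(m+n)$-cube) and of addressing only the iterated-cokernel meaning of $K_m\Om^n\cN$ --- which suffices, since the paper has already observed that the two meanings agree for split cubes. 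One small correction: the general isomorphism $K_k\cE \isom K_0\Om^k\cE$ is not obtained from the $k=1$ case ``by iteration'' --- naive iteration would require exactly the cube-extension you are in the middle of proving --- it is the main theorem of \cite{grayson-MR2947948} and should simply be cited as such.
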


\begin{proof}
  We apply \cite[Corollary 7.1]{grayson-MR2947948} and use the notation used there: 
  $K_m \Om^n \cN = \pi_m K \Om^n \cN \isom \pi_m V^0 \Om^n K \cN \isom \pi_m \Om^n K \cN \isom K_{m+n}\cN$.
  Here $K \Om^n \cN$ is the multi-relative $K$-theory spectrum of the split $n$-cube $\Om^n \cN$ of exact categories,
  and $V^0$ denotes the functor that gives the $0$-th stage of the Postnikov filtration of a spectrum.
\end{proof}

Readers interested only in the elementary content of this paper may take $K_n \cN \defeq K_0 \Om^n \cN$ as a definition.  In that case, the
proof of the lemma would amount to the following chain of isomorphisms: $K_m \Om^n \cN \defeq (K_0 \Om^m) (\Om^n \cN) \isom K_0
(\Om^m \Om^n \cN) \isom K_0 (\Om^{m+n} \cN) \isom K_{m+n}\cN$.

\begin{corollary}
  Given an exact functor $F : \cM \to \cN$ between exact categories that support long exact sequences and $n \in \NN$, there is a long exact sequence 
  \[ 
  \xymatrix {
    \dots \ar[r] &
    K_{n+1} \cM \ar[r] &
    K_{n+1} \cN \ar[r] &
    (K_0 \Om)(\Om^n [ \cM \xrightarrow F \cN]) \ar `r[d] `[l] `[llld] `[dll] [dll] \\ 
    &
    K_n \cM \ar[r] &
    K_n \cN \ar[r] &
    \quad \quad \dots \quad \quad \ar `r[d] `[l] `[llld] `[dll] [dll] \\ &
    K_0 \cM \ar[r] &
    K_0 \cN &
  } 
  \]
  Here $K_0 \Om$ denotes the functor on pairs of exact categories discussed above, extended to split $n$-cubes of such pairs.
\end{corollary}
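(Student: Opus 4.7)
The plan is to extend the five-term exact sequence of Theorem~\ref{exseqK00000} to an exact sequence of functors on split $k$-cubes of pairs, apply it to $\Om^k[F]$ for each $k = 0, 1, \dots, n$, and splice the resulting segments together.

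First I would observe that Theorem~\ref{exseqK00000} is really an exact sequence of five functors on the category of pairs of exact categories: namely $[F] \mapsto K_0\Om[0 \to \cM]$, $[F] \mapsto K_0\Om[0 \to \cN]$, $[F] \mapsto K_0\Om[F]$, $[F] \mapsto K_0\Om[\cM \to 0]$, and $[F] \mapsto K_0\Om[\cN \to 0]$, with natural transformations induced by the maps of pairs displayed in the theorem. By the discussion preceding Lemma~\ref{Kmn}, since a direct summand of an exact sequence is exact, this exact sequence extends to an exact sequence of functors on split $k$-cubes of pairs of exact categories, for any $k \in \NN$.

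Next I would apply the extended sequence to the split $k$-cube of pairs $\Om^k[F] = [\Om^k\cM \xrightarrow{F} \Om^k\cN]$. The middle term becomes $(K_0\Om)(\Om^k[F])$ by definition. The outer four functors factor through the source/target projections from pairs to exact categories, which commute with the cube constructions, so after extension and evaluation at $\Om^k[F]$ they become $K_1$ extended applied to $\Om^k\cM$ and to $\Om^k\cN$, and $K_0$ extended applied to $\Om^k\cM$ and to $\Om^k\cN$, using Lemma~\ref{relKspecial} for the identification on pairs. By Lemma~\ref{Kmn} these four groups are $K_{k+1}\cM$, $K_{k+1}\cN$, $K_k\cM$, and $K_k\cN$, yielding the five-term exact sequence
\[ K_{k+1}\cM \to K_{k+1}\cN \to (K_0\Om)(\Om^k[F]) \to K_k\cM \to K_k\cN. \]
Splicing these sequences for $k = 0, 1, \dots, n$ then produces the desired long exact sequence: by naturality of $F$, the two rightmost terms of the sequence for $\Om^k[F]$ (together with the map between them) agree with the two leftmost terms of the sequence for $\Om^{k-1}[F]$, and exactness at each interior position of the spliced sequence comes from exactness at an interior position of one of the constituents.

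I expect the main obstacle to be the bookkeeping of identifications: verifying that extending a composite such as $K_1 \circ \src$ to split $k$-cubes of pairs agrees with first extending $\src$ to split $k$-cubes and then extending $K_1$, and that the connecting maps across the splice points really are induced by $K_k F$. These compatibilities ultimately reduce to the fact that source/target projections commute with the iterated cokernel construction used to extend functors to split cubes; once they are checked, the splicing is immediate.
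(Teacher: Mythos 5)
Your proposal is correct and follows essentially the same route as the paper: the paper likewise interprets the five-term sequence (in the already-identified form of Corollary~\ref{exseqK10}) as an exact sequence of functors on pairs, extends it to split cubes of pairs using the remark before Lemma~\ref{Kmn}, applies it to $\Om^n[\cM \xrightarrow{F} \cN]$, rewrites the outer terms via Lemma~\ref{Kmn}, and splices the resulting five-term sequences. Your extra step of passing through Theorem~\ref{exseqK00000} and Lemma~\ref{relKspecial}, and your remarks on the compatibility of the cube-extension with the source/target projections, are just minor bookkeeping the paper leaves implicit.
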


\begin{remark}
  Settling the conjecture \ref{conj1} would imply that $(K_0 \Om)(\Om^n [F]) \isom K_{n+1}[F]$ for $n \in \NN$.  The corollary doesn't
  address generators and relations for $K_0[F]$, but good ones are already provided by the isomorphism $K_0[F] \isom \coker ( K_0(F) : K_0\cM
  \to K_0\cN ) $.
\end{remark}

\begin{proof}
  We interpret the groups in the statement of \ref{exseqK10} as functors from the category of pairs $[F]$ of exact categories to the
  category of abelian groups, so we can extend them to functors on split $n$-cubes of pairs of exact categories.  The sequence remains exact
  when applied to $\Om^n [ \cM \xrightarrow F \cN]$, yielding the following exact sequence.
  \[ \xymatrix {
    K_1 \Om^n \cM \ar[r] &
    K_1 \Om^n \cN \ar[r] &
    (K_0\Om) (\Om^n [\cM \xrightarrow F \cN]) \ar[r] & 
    K_0 \Om^n \cM \ar[r] &
    K_0 \Om^n \cN & 
    } \]
  By \ref{Kmn} we can rewrite it as follows.
  \[ \xymatrix {
    K_{n+1} \cM \ar[r] &
    K_{n+1} \cN \ar[r] &
    (K_0\Om) (\Om^n [\cM \xrightarrow F \cN]) \ar[r] &
    K_n \cM \ar[r] &
    K_n \cN & 
    } \]
  Splicing these 5-term exact sequences together yields the desired result.
\end{proof}

\section{Proof of exactness, part 0}

\begin{lemma}
  Let $\cM$ be an exact category, and let $1_\cM : \cM \to \cM$ be the identity functor.  Then $K_0 \Om[1_\cM] = 0$.
\end{lemma}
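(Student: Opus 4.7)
The plan is to reduce every class in $K_0\Omega[1_\cM]$ to a canonical form and then kill it via a brutal filtration by degree.

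First, I would observe that for any $X = (M,N,u) \in B[1_\cM]$, the pair $(u, 1_N)$ is a $p$-map from $X$ to $(\topbot N, N, 1_{\topbot N})$: the source component $u$ is a quasi-isomorphism in $C\cM^2$ by the definition of $B[1_\cM]$, the target component $1_N$ is trivially an isomorphism in $B\cM$, and the comparison square commutes tautologically because $F$ is the identity. Hence in $K_0\Omega[1_\cM]$ one has $[X] = [(\topbot N, N, 1)]$, and it suffices to show $[(\topbot N, N, 1)] = 0$ for every $N \in B\cM$.

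Second, I would observe that $\Phi : N \mapsto (\topbot N, N, 1)$ is an exact functor $B\cM \to B[1_\cM]$, inducing a homomorphism $K_0 B\cM \to K_0\Omega[1_\cM]$ which by the previous step hits every class. Moreover, for any $N' \in C\cM$ one has $\Phi(\Delta N') = (\Delta N', \Delta N', 1) = \Delta[1_\cM](N',N',1_{N'})$, which is diagonal in $B[1_\cM]$. Therefore this homomorphism kills the subgroup of $K_0 B\cM$ generated by the classes of diagonal binary complexes, and it remains only to show that this subgroup equals all of $K_0 B\cM$.

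Third, I would apply the brutal truncation by degree to an arbitrary $N = (N_\bullet, d, d') \in B\cM$: since both $d$ and $d'$ lower degree by one, each $F_{\le k}N$ (consisting of $N_i$ in degrees $i \le k$ and zero in higher degrees) is an admissible subcomplex in $B\cM$, and the successive quotients $F_{\le k}N / F_{\le k-1}N = (N_k, 0, 0)$ are concentrated in a single degree with vanishing differentials. Each such quotient is $\Delta$ of the chain complex $N_k$ concentrated in degree $k$, hence diagonal. So $[N] = \sum_k [(N_k, 0, 0)]$ in $K_0 B\cM$ is a sum of diagonal classes, completing the proof.

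I do not anticipate a serious obstacle: the hypothesis $F = 1_\cM$ enters only in step 1, where it makes $FM = M$ so that the canonical map $(u,1)$ is available, and the degree filtration in step 3 is a standard homological construction. The only verifications required are admissibility of the maps in the relevant exact categories and compatibility of the comparison maps, both of which follow by inspection.
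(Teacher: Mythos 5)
Your proof is correct and follows essentially the same route as the paper: first use the $p$-map $(u,1):(M,N,u)\to(\topbot N,N,1)$ to reduce to objects with identity comparison map, then use the degree-wise filtration, whose one-degree subquotients have vanishing (hence equal) differentials and are therefore diagonal. Your packaging of the second step via the exact functor $N\mapsto(\topbot N,N,1)$ on $B\cM$ is only a minor reorganization of the paper's argument, which applies the same filtration directly inside $B[1_\cM]$.
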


\begin{proof}
  Consider a generator $\gamma = [(M,N,u)]$ of $K_0 \Om[1_\cM]$ with $(M,N,u) \in B[1_\cM]$.  The map $(u,1) : (M,N,u) \to (\topbot N, N, 1)$
  is in $p$, so $\gamma = [(\topbot N, N, 1)]$.  The successive subquotients $(\topbot N_i[-i], N_i[-i], 1)$ of the degree-wise filtration of
  $(\topbot N, N, 1)$ are in $B[1_\cM]$, because an identity map is a quasi-isomorphism, so we can write $\gamma = \sum_i [(\topbot N_i[-i],
    N_i[-i], 1)]$.  The object $(\topbot N_i[-i], N_i[-i], 1)$ is concentrated in degree $i$, so its differentials vanish, rendering its top and
  bottom differentials equal, and thus it is in the image of $\Delta$.  Hence each term in the sum vanishes.
\end{proof}

\begin{corollary}\label{seqiscomplex}
  The sequence of groups in \ref{exseqK00000} is a chain complex.  
\end{corollary}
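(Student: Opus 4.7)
The plan is to show that each of the three consecutive two-step composites in
\[
K_1 \cM \to K_1 \cN \to K_0\Om[F] \to K_0 \cM \to K_0 \cN
\]
is zero. Since $K_0\Om$ is functorial on pairs of exact categories, it suffices to factor each of the three composites of pair-maps induced by
\[
[0 \to \cM] \xrightarrow{(1,F)} [0 \to \cN] \xrightarrow{(0,1)} [\cM \xrightarrow{F} \cN] \xrightarrow{(1,0)} [\cM \to 0] \xrightarrow{(F,1)} [\cN \to 0]
\]
through some identity pair $[1_\cC]$; the preceding lemma then forces the composite on $K_0\Om$ to pass through $K_0\Om[1_\cC] = 0$.

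Computing the composites, I would observe that $(0,1)\circ(1,F) = (0,F)$, that $(1,0)\circ(0,1) = (0,0)$, and that $(F,1)\circ(1,0) = (F,0)$. Each of these factors nicely through an identity pair, namely as
\[
[0 \to \cM] \xrightarrow{(0,1)} [\cM \xrightarrow{1} \cM] \xrightarrow{(1,F)} [\cM \xrightarrow{F} \cN]
\]
through $[1_\cM]$, as
\[
[0 \to \cN] \xrightarrow{(0,1)} [\cN \xrightarrow{1} \cN] \xrightarrow{(0,0)} [\cM \to 0]
\]
through $[1_\cN]$, and as
\[
[\cM \xrightarrow{F} \cN] \xrightarrow{(F,1)} [\cN \xrightarrow{1} \cN] \xrightarrow{(1,0)} [\cN \to 0]
\]
again through $[1_\cN]$. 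Each such factorization is verified by a one-line composition check on the pair of underlying functors.

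I do not anticipate any real obstacle: this corollary is a direct and formal consequence of the preceding lemma, once one notices the three factorizations above. The only thing to be careful about is the bookkeeping of which pair-map is $(\phi,\psi)$ and in which order the compositions are taken, but no genuinely new ingredient beyond the vanishing $K_0\Om[1_\cC]=0$ is needed.
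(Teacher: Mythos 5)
Your proposal is correct and follows essentially the same route as the paper, which proves the corollary by a commutative diagram realizing exactly these factorizations through identity pairs and invoking the vanishing $K_0\Om[1_\cM]=0$ at three spots. The only (immaterial) difference is that for the middle composite the paper factors through the pair $[0\to 0]$ rather than through $[\cN\xrightarrow{1}\cN]$; both are covered by the same lemma.
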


\begin{proof}
  Examine the following commutative diagram, in which the lemma has been applied at three spots.
  \[\xymatrix{
    K_0\Om[0\to\cM] \ar[r]^{(1,F)}\ar[d]^{(0,1)} & K_0\Om[0\to\cN] \ar[r]^{(1,0)}\ar[d]^{(0,1)} & K_0\Om[0\to0] = 0\ar[d]^{(0,1)} \\
    0=K_0\Om[\cM\xrightarrow1\cM] \ar[r]^{(1,F)} & K_0\Om[\cM\xrightarrow F\cN] \ar[r]^{(1,0)}\ar[d]^{(F,1)} & K_0\Om[\cM\to0]\ar[d]^{(F,1)} \\
    &  0=K_0\Om[\cN\xrightarrow 1\cN] \ar[r]^{(1,0)} & K_0\Om[\cN\to0] 
    }\]
\end{proof}

\section{Facts about the Grothendieck group, part 1}

\begin{lemma}\label{K0lemma}
  Let $\cM$ be an exact category, and consider objects $M,M' \in \cM$.  Their classes $[M]$ and $[M']$ in $K_0\cM$ are equal if and only if
  there exist objects $V^0, V^1, V^2$ of $\cM$ and exact sequences of the form $E : 0 \to M \oplus V^0 \to V^1 \to V^2 \to 0$ and $E'
  : 0 \to M' \oplus V^0 \to V^1 \to V^2 \to 0$.  (We assume no relationship between the maps of $E$ and the maps of $E'$.)
\end{lemma}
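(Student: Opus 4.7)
The plan is to introduce the relation $\sim$ on iso classes of $\cM$ defined by the conclusion of the lemma---that is, $M \sim M'$ iff there exist $V^0, V^1, V^2$ and the two displayed short exact sequences---and to show that $\sim$ coincides with equality in $K_0\cM$. The ``if'' direction is immediate: the two exact sequences yield $[M] + [V^0] = [V^1] - [V^2] = [M'] + [V^0]$ in $K_0\cM$, so $[M] = [M']$.

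For the converse I would first verify that $\sim$ is an equivalence relation compatible with $\oplus$. Reflexivity uses $V^0 = V^2 = 0$, $V^1 = M$; symmetry is apparent from the statement. For transitivity, given witnesses $(V^0, V^1, V^2)$ for $M \sim M'$ and $(W^0, W^1, W^2)$ for $M' \sim M''$, direct-summing the four given exact sequences in the two pairs (the ones starting with $M \oplus V^0$ and $M' \oplus W^0$, and the ones starting with $M' \oplus V^0$ and $M'' \oplus W^0$) produces the single triple $(V^0 \oplus M' \oplus W^0,\, V^1 \oplus W^1,\, V^2 \oplus W^2)$ witnessing $M \sim M''$. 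Compatibility with $\oplus$ is likewise handled by direct-summing exact sequences.

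The crucial elementary step is then: for any short exact sequence $0 \to A \to B \to C \to 0$, we have $A \oplus C \sim B$, witnessed by $V^0 = 0$, $V^1 = B \oplus C$, $V^2 = C$. Indeed, the first required sequence is the direct sum of the given sequence with the identity on $C$, namely $0 \to A \oplus C \to B \oplus C \to C \to 0$, and the second is the split sequence $0 \to B \to B \oplus C \to C \to 0$.

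Finally, I would invoke the standard description of $K_0\cM$ as the group completion of the commutative monoid of iso classes modulo the congruence $\approx$ generated by the relation $[B] = [A] + [C]$ for each short exact sequence. Equality $[M] = [M']$ in $K_0\cM$ amounts to the existence of an object $N$ such that $M \oplus N$ and $M' \oplus N$ are connected in $\approx$ by a finite chain of elementary moves; by the elementary-step observation together with compatibility and transitivity, each elementary move lies in $\sim$, so $M \oplus N \sim M' \oplus N$. The resulting witness gives a triple $(V^0, V^1, V^2)$ with exact sequences for $M \oplus N$ and $M' \oplus N$, and absorbing $N$ into $V^0$ finishes. The main delicate point is the direct-summand bookkeeping in the transitivity and absorption steps; the substantive content is the elementary-step observation.
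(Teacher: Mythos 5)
Your proposal is correct and takes essentially the same route as the paper: the same relation $\sim$, the same verification that it is an additive equivalence relation (your one-step transitivity, absorbing $M'$ into the new $V^0$, combines the paper's additivity and cancellation arguments), and the same pair of exact sequences showing $B \sim A \oplus C$ for any short exact sequence $0 \to A \to B \to C \to 0$. The only difference is the endgame: the paper constructs explicit mutually inverse homomorphisms between $K_0\cM$ and the group completion of the cancellative monoid of $\sim$-classes, whereas you cite the standard description of $K_0\cM$ as the group completion of isomorphism classes modulo the exact-sequence congruence and then absorb the stabilizing object $N$ into $V^0$ --- which is precisely the paper's cancellativity step in different packaging.
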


\begin{remark}
  Clayton Sherman has notified me that this lemma occurs in a 1974 preprint of Steven Gersten, entitled {\em Higher K-Theory I : Products},
  where it is attributed to Richard Swan.
\end{remark}

\begin{proof}

  Use the notation $M \sim M'$ to indicate the condition that the three objects and two short exact sequences exist.  If $M \sim M'$, then
  $[M]=[M']$ follows from additivity and the existence of $E$ and $E'$.  

  We introduce some terminology.  A commutative (additive) monoid is {\em cancellative} if in it $x+z=y+z \Rightarrow x=y$ holds.  An additive
  equivalence relation is called {\em cancellative} if the corresponding quotient commutative monoid is.

  The relation $\sim$ is a cancellative additive equivalence relation on the commutative monoid consisting of the isomorphism classes of $\cM$,
  where direct sum gives the addition operation.  For example, additivity follows by forming direct sums of the exact sequences involved.  To
  show $M \sim M$ (reflexivity), one takes $E$ and $E'$ to be $0 \to M \xrightarrow 1 M \to 0 \to 0$; compatibility with isomorphism is similar.
  Symmetry follows from interchanging $E$ and $E'$.  To show $M \oplus W \sim M' \oplus W \Rightarrow M \sim M'$ (cancellation), one replaces
  $V^0$ by $V^0 \oplus W$.  To show $M \sim M' \sim M'' \Rightarrow M \sim M''$ (transitivity), one uses additivity to deduce that
  $M \oplus M' \sim M' \oplus M''$ and then cancels $M'$.
  
  Now let $H$ denote the commutative monoid of equivalence classes of objects of $\cM$ modulo $\sim$, with the equivalence class of $M$ denoted
  by $\langle M \rangle$.  Since it is cancellative, the universal map from $H$ to a group $G$ is injective.  To see that, construct $G$ in the
  standard way as a quotient set of $H \times H$, where $(h,h') \sim (k,k')$ if and only if $h+k'=h'+k$.  We may thus identify $h$ with the
  equivalence class of $(h,0)$.

  There is a well defined homomorphism $G \to K_0\cM$ defined by $(\langle M \rangle, \langle M' \rangle) \mapsto [M] - [M']$.  

  There is also a well defined homomorphism $K_0\cM \to G$ defined on generators by $[M] \mapsto \langle M \rangle$, because the relation is
  additive over short exact sequences.  To see that, consider an exact sequence $0 \to M' \xrightarrow i M \xrightarrow p M'' \to 0$, and use
  the following pair of exact sequences to show $M \sim M' \oplus M''$.
  \[
  \xymatrixrowsep {7pt}
  \xymatrix{ 
    0 \ar[r] &
    M \ar[r]^-{\begin{psmallmatrix}1\\0\end{psmallmatrix}} &
    M \oplus M'' \ar[r]^-{\begin{psmallmatrix}0&1\end{psmallmatrix}} &
    M'' \ar[r] &
    0 \\
    0 \ar[r] &
    M' \oplus M'' \ar[r]^-{\begin{psmallmatrix}i&0\\0&1\end{psmallmatrix}} &
    M \oplus M'' \ar[r]^-{\begin{psmallmatrix}p&0\end{psmallmatrix}} &
    M'' \ar[r] &
    0 
  }
  \] 
  The two homomorphisms are inverse to each other, as can be
  verified on generators, so they are isomorphisms.  Hence $\langle M \rangle = \langle M' \rangle$ if and only if $[M]=[M']$, yielding the
  result desired.
\end{proof}

\begin{corollary}\label{K0lemmacoro}
  The classes $[M]$ and $[M']$ in $K_0\cM$ are equal if and only if there exist three objects $V^0, V^1, V^2$ of $\cM$ and two exact sequences
  of the form $E : 0 \to M \to V^0 \to V^1 \to V^2 \to 0$ and $E' : 0 \to M' \to V^0 \to V^1 \to V^2 \to 0$.
\end{corollary}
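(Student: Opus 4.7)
The plan is to deduce the corollary from Lemma~\ref{K0lemma} by a single explicit construction. The forward implication is immediate from additivity: any 4-term exact sequence of the stated form yields $[M] = [V^0] - [V^1] + [V^2]$ in $K_0\cM$, and the corresponding reading for $M'$ gives $[M] = [M']$.

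For the converse, given $[M] = [M']$, I would apply Lemma~\ref{K0lemma} to obtain objects $W^0, W^1, W^2$ together with short exact sequences
\[
0 \to M \oplus W^0 \xrightarrow{\alpha} W^1 \xrightarrow{\beta} W^2 \to 0
\qquad\text{and}\qquad
0 \to M' \oplus W^0 \xrightarrow{\alpha'} W^1 \xrightarrow{\beta'} W^2 \to 0.
\]
I would then set $V^0 \defeq M \oplus M' \oplus W^0$, $V^1 \defeq W^1$, $V^2 \defeq W^2$, build the 4-term sequence for $M$ from the inclusion $m \mapsto (m,0,0)$, the map $(m,m',w) \mapsto \alpha'(m',w)$, and $\beta'$, and build the sequence for $M'$ symmetrically from $(0,1,0)$, $(m,m',w) \mapsto \alpha(m,w)$, and $\beta$. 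The crux is that both sequences are supported on the \emph{same} triple $(V^0, V^1, V^2)$; only the maps differ, which the statement permits.

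The only step requiring thought is choosing $V^0$: the symmetric enlargement by $M \oplus M'$ is what allows both $M$ and $M'$ to occur as the initial subobject of a common $V^0$, while the quotient $V^0/M \cong M' \oplus W^0$ (resp.\ $V^0/M' \cong M \oplus W^0$) is precisely the first term of the 3-term sequence associated to the \emph{other} class, which supplies the remainder of the 4-term sequence. With the setup in hand, the four exactness checks reduce to one-line calculations: at $V^0$, the injectivity of $\alpha'$ (resp.\ $\alpha$) forces the kernel of the middle map to be exactly $M \oplus 0 \oplus 0$ (resp.\ $0 \oplus M' \oplus 0$), and exactness at $V^1$ is inherited from the exactness of the opposite 3-term sequence. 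The main obstacle is therefore purely one of recognizing this symmetric setup; once seen, no further machinery is needed.
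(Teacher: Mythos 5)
Your construction is correct and is essentially the paper's own proof: the paper produces the same two 4-term sequences by direct-summing the length-one acyclic complexes formed from $1_M$ and $1_{M'}$ onto the short exact sequences supplied by Lemma \ref{K0lemma}, which yields exactly your maps $m \mapsto (m,0,0)$, $(m,m',w) \mapsto \alpha'(m',w)$, $\beta'$ (and symmetrically for $M'$), with the common middle object $M \oplus M' \oplus W^0$. The only difference is presentational — the paper packages the exactness check as "adding acyclic complexes" (i.e., splicing a split short exact sequence with the given one) rather than your element-wise kernel verification.
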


\begin{proof}
  The leftward implication follows from additivity in $K_0$ applied to $E$ and $E'$.  To prove the rightward implication, suppose $[M]=[M']$.
  Adding length $1$ acyclic complexes formed from the maps $1_M$ and $1_{M'}$ to the short exact sequences provided by the Lemma yields the
  following exact sequences, of the form desired.
  \[
  \xymatrixrowsep {7pt}
  \xymatrix{
    0 \ar[r] & M'\ar[r] & M' \oplus M \oplus V^0 \ar[r] & V^1 \ar[r] & V^2 \ar[r] & 0 \\
    0 \ar[r] & M \ar[r] & M  \oplus M'\oplus V^0 \ar[r] & V^1 \ar[r] & V^2 \ar[r] & 0 \\
    }\]
\end{proof}

\section{Proof of exactness, part 1}

\begin{lemma}\label{part1}
  The subsequence $ K_0\Om[\cM \xrightarrow F \cN] \to K_0\Om[\cM \to 0] \xrightarrow F K_0\Om[\cN \to 0] $ of the sequence in Theorem \ref{exseqK00000} is exact.
\end{lemma}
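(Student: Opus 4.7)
The plan is to apply Corollary \ref{seqiscomplex} to reduce the task to lifting an arbitrary kernel element, and then to construct an explicit preimage whose existence hinges on Corollary \ref{K0lemmacoro}. By \ref{seqiscomplex} the composition vanishes, so we only need the reverse inclusion. Lemma \ref{relKspecial} identifies the right-hand map with $K_0(F) : K_0\cM \to K_0\cN$; tracing the isomorphisms in the proof of that lemma shows that a generator $[(M,N,u)] \in K_0\Om[F]$ with $M = (M_\top,M_\bot) \in C\cM^2$ maps to $\pm([M_\bot] - [M_\top]) \in K_0\cM$ (the sign arising from the cokernel presentation $K_0 qC\cM^2 / \Delta K_0 qC\cM \isom K_0\cM$ used there, since the functor $(1,0) : B[F] \to B[\cM\to 0]$ sends $(M,N,u)$ to $(M,0,0)$).

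Now given $x \in \ker K_0(F)$, write $x = [A] - [B]$ with $A, B \in \cM$. The equality $[FA] = [FB]$ in $K_0\cN$ feeds Corollary \ref{K0lemmacoro} to produce objects $V^0, V^1, V^2 \in \cN$ together with exact sequences
\[ E: 0 \to FA \to V^0 \to V^1 \to V^2 \to 0, \qquad E': 0 \to FB \to V^0 \to V^1 \to V^2 \to 0 \]
sharing the same intermediate objects. The crucial point is that $V^\bullet$, placed in degrees $0,1,2$, is a \emph{single} graded object that simultaneously carries two chain-complex differentials: take the top from $E'$ and the bottom from $E$, obtaining a bounded binary chain complex $N \in B\cN$ with $H_0(\Top N) = FB$, $H_0(\Bot N) = FA$, and higher cohomology zero. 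Setting $M = (B,A) \in C\cM^2$ (each component concentrated in degree $0$) and letting $u : FM \to \topbot N$ be the pair of augmentations $FB \hookrightarrow V^0$ and $FA \hookrightarrow V^0$ from $E'$ and $E$ respectively, I find that each component of $u$ is a quasi-isomorphism, so $(M,N,u) \in B[F]$. The class $[(M,N,u)] \in K_0\Om[F]$ then maps to $[A] - [B] = x$ in $K_0\cM$ (possibly swapping $A$ and $B$ to match the sign convention), exhibiting the desired preimage.

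The hard part is realizing two resolutions of the \emph{different} objects $FA$ and $FB$ as the two differentials of a \emph{single} binary chain complex on a \emph{shared} graded object. Generic resolutions of distinct objects share no graded structure, and without such sharing there is no way to manufacture an element of $B\cN$ from the vanishing of $[FA] - [FB]$. Corollary \ref{K0lemmacoro} is the precise technical input that forces this common structure, converting the abstract equality $[FA] = [FB]$ in $K_0\cN$ into the concrete binary chain complex $N$ needed to build an element of $B[F]$ and thereby the preimage in $K_0\Om[F]$.
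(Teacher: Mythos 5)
Your proposal is correct and takes essentially the same route as the paper's proof: reduce via Corollary \ref{seqiscomplex} and Lemma \ref{relKspecial}, apply Corollary \ref{K0lemmacoro} to $FA$ and $FB$ to obtain two four-term resolutions sharing the graded object $V^0, V^1, V^2$, and use the two induced differentials on that single graded object to build a binary complex $N \in B\cN$ and an object of $B[F]$ whose class projects to $[A]-[B]$. The sign ambiguity you flag is harmless (the paper does not belabor it either), so nothing further is needed.
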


\begin{proof}
  By Corollary \ref{seqiscomplex} we know the sequence is a complex.  The identifications in Lemma \ref{relKspecial} allow us to write
  the sequence in the form $ K_0\Om[F] \to K_0\cM \xrightarrow F K_0\cN $, so we consider an arbitrary element $[M]-[M']$ of $K_0\cM$ killed by
  $F$ and try to show it is in the image of the previous map, which we'll call {\em projection}.  By \ref{K0lemmacoro} applied to $FM$ and
  $FM'$ we find exact sequences $0 \to FM \xrightarrow{u} N^0 \xrightarrow{d} N^1 \xrightarrow{d} N^2 \to 0$ and $ 0 \to FM' \xrightarrow{u'}
  N^0 \xrightarrow{d'} N^1 \xrightarrow{d'} N^2 \to 0$ in $\cN$.  Let $N$ denote the binary complex $(N,d,d')$ and regard $u$ and $u'$ as
  quasi-isomorphisms $u : FM \xrightarrow \sim \Top N$ and $u' : FM' \xrightarrow \sim \Bot N$.  (Here we write the same name for an object and
  the corresponding chain complex concentrated in degree $0$.)  The class in $K_0\Om[F]$ of the object $((M,M'), N, (u,u')) \in B[F]$ projects
  to $[M]-[M']$ in $K_0 \cM$, yielding the result.
\end{proof}

\section{Proof of exactness, part 2}

Let $\cM \xrightarrow F \cN$ be an exact functor between exact categories.  For $i \in \NN$ and $(M,N,u) \in B[F]$, let $(M,N,u)[i] \defeq
(M[i],N[i],u[i])$, where the shift operation on chain complexes (or on binary chain complexes) is the standard one, as defined in
\cite[Definition 2.1]{grayson-MR2947948}.

\begin{lemma}\label{K0omegashift}
  For $i \in \NN$ and $(M,N,u) \in B[F]$, the equation $[(M,N,u)[i]] = (-1)^i [(M,N,u)]$ holds in $K_0 \Omega[F]$.
\end{lemma}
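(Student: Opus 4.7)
The approach is by induction on $i$. The base case $i = 0$ is trivial, so it suffices to treat $i = 1$: given $X = (M,N,u) \in B[F]$, show $[X] + [X[1]] = 0$ in $K_0\Om[F]$.

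I plan to realize this relation through a short exact sequence $0 \to X \to T(X) \to X[1] \to 0$ in $B[F]$ together with a proof that $[T(X)] = 0$. Here $T(X)$ is the ``mapping cone of the identity on $X$'', defined as follows: $T(X)_\tar \in B\cN$ is the binary chain complex on the graded object $N \oplus N[1]$ with top differential
\[
\begin{pmatrix} d & 1_N \\ 0 & -d \end{pmatrix}
\quad\text{and bottom differential}\quad
\begin{pmatrix} d' & 1_N \\ 0 & -d' \end{pmatrix};
\]
the source $T(X)_\src \in C\cM^2$ is obtained by the analogous cone-of-identity construction applied separately to the two chain complexes comprising $M$; and $T(X)_\qi$ is the comparison map functorially induced from $u$, which remains a quasi-isomorphism. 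The inclusion of the first summand and the projection onto the second, in both source and target, assemble to the stated short exact sequence in $B[F]$, whence $[T(X)] = [X] + [X[1]]$ by additivity in $K_0\Om[F]$.

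The main work, and the step where I expect the difficulty to lie, is showing $[T(X)] = 0$ in $K_0\Om[F]$. The plan is to exhibit $T(X)$ as equivalent, modulo $p$-maps and objects in the image of $\Delta[F] : pC[F] \to pB[F]$, to a diagonal object. The key structural fact is that the two differentials of $T(X)_\tar$ share the same off-diagonal entry $1_N$---the very source of the cone's acyclicity---and differ only in their diagonal blocks, by $d - d'$. A natural way to exploit this is to find a shear automorphism of $N \oplus N[1]$ that equalizes the two differentials, or alternatively to filter $T(X)$ so that its successive subquotients are concentrated in single degrees and hence trivially diagonal. The principal obstacle is managing the $d$-versus-$d'$ discrepancy on the target side while simultaneously maintaining compatibility with the source data in $C\cM^2$ and preserving the quasi-isomorphism property of $T(X)_\qi$ throughout the construction.
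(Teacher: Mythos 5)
Your reduction and formal framework coincide with the paper's: both form the mapping cone of the identity on $X=(M,N,u)$, use the cone exact sequence $0\to X\to \Cone(1_X)\to X[-1]\to 0$ (your shift convention differs harmlessly) and additivity in $K_0\Om[F]$, so that everything rests on the vanishing of the class of that cone. That vanishing is the real content of the lemma, and your proposal does not establish it: you explicitly leave the ``principal obstacle'' open, and neither of the routes you sketch works as stated. A shear (or any other) automorphism cannot make $T(X)_\tar$ diagonal: an isomorphism in $B\cN$ is a single graded map commuting with \emph{both} differentials, so if it carried $T(X)_\tar$ onto a binary complex whose top and bottom differentials agree, the top and bottom differentials of $T(X)_\tar$ would already have to be equal, i.e.\ $d=d'$. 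And a filtration of $T(X)$ whose successive subquotients are concentrated in single degrees does not stay inside $B[F]$: an object concentrated in one degree has zero differentials, so its comparison map is a quasi-isomorphism only if it is an isomorphism, and the components of $u$ (hence of the induced comparison of the cone, filtered degree by degree) need not be isomorphisms. This is exactly the failure that prevents the naive degreewise filtration of $(M,N,u)$ itself from living in $B[F]$.

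The missing idea, which is how the paper proceeds, is to filter $T(X)$ by the mapping cones of the identity on the naive truncations of $X$. Every stage of that filtration has acyclic source and acyclic target, because cones of identity maps are acyclic, and \emph{any} chain map between acyclic complexes is a quasi-isomorphism; so every stage is a genuine object of $B[F]$ even though the truncations of $X$ themselves are not. The successive subquotients are the cones of the identity on the single-degree pieces, i.e.\ two-term complexes of the form $N_j\xrightarrow{1}N_j$ (and likewise on the source side); their top and bottom differentials coincide, and (after the evident map in $p$ from the object with the same target and zero source, which is legitimate because the source components are acyclic) they are identified with objects in the image of $\Delta[F]$, so their classes vanish in $K_0\Om[F]$. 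Summing over the filtration gives $[T(X)]=0$, and with your exact sequence this completes the proof; without this acyclicity observation your argument has a genuine gap at its central step.
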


\begin{proof}
  We may assume $i = -1$.  The naive filtration of $M$ and $N$ does not necessarily induce a filtration of $u$ in $B[F]$, because the components
  $u_j$ may not be isomorphisms.  Nevertheless, it does induce a filtration of the mapping cone $C$ of the identity map $1_{(M,N,u)}$, because
  identity maps have acyclic mapping cones, and any map between such cones is a quasi-isomorphism.  Moreover, the successive subquotients of the
  filtration are diagonal, hence $[C] = 0$.  Additivity over the cone exact sequence $ 0 \to (M,N,u) \to C \to (M,N,u)[-1] \to 0$ gives the
  result.
\end{proof}

\begin{corollary}
  Any element of $K_0 \Omega[F]$ can be written in the form $[(M,N,u)]$ for some object $(M,N,u)$ of $B[F]$.
\end{corollary}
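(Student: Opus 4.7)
The plan is to use the preceding lemma (Lemma~\ref{K0omegashift}) to eliminate minus signs, and then to use additivity to collapse a finite sum into a single class.

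First I would observe that, by the very definition of $K_0\Omega[F]$ as a quotient of $K_0 B[F]$, every element can be written as a finite $\ZZ$-linear combination $\sum_j n_j [(M_j, N_j, u_j)]$ with $n_j \in \ZZ$ and $(M_j, N_j, u_j) \in B[F]$. By collecting summands (replacing $n_j[(M_j,N_j,u_j)]$ by $\epsilon_j [(M_j,N_j,u_j)^{\oplus |n_j|}]$, where $\epsilon_j = \mathrm{sgn}(n_j)$), one reduces to a signed sum $\sum_j \epsilon_j [(M_j, N_j, u_j)]$ with $\epsilon_j \in \{\pm 1\}$.

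Next, I would invoke Lemma~\ref{K0omegashift}: applied with $i=1$ it gives $-[(M,N,u)] = [(M,N,u)[1]]$. So every term with $\epsilon_j = -1$ can be rewritten as a positive class by shifting the corresponding object by one degree. After this step, the element is a sum $\sum_j [(M_j', N_j', u_j')]$ of (positively signed) classes of objects of $B[F]$.

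Finally, I would use the fact that in the Grothendieck group of any exact category, the split short exact sequence $\sseq{X}{X \oplus Y}{Y}$ yields $[X] + [Y] = [X \oplus Y]$; this passes to the quotient $K_0 \Omega[F]$. By induction on the number of summands, $\sum_j [(M_j', N_j', u_j')] = \bigl[\bigoplus_j (M_j', N_j', u_j')\bigr]$, which is the class of a single object of $B[F]$, as desired.

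There is no real obstacle here: the only nontrivial ingredient is the sign-change lemma that was just proved, and once minus signs are gone, direct sums do the rest. The argument is essentially bookkeeping, relying on the shift trick from Lemma~\ref{K0omegashift} to turn the abelian group generated by $B[F]$ into a monoid under direct sum.
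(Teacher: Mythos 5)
Your proof is correct and follows essentially the same route as the paper: both arguments use direct sums to combine generator classes and Lemma~\ref{K0omegashift} with $i=1$ to absorb minus signs into a degree shift, differing only in the order of these two steps (the paper first collapses to a single difference $[(M,N,u)]-[(M',N',u')]$ and then rewrites it as $[(M\oplus M'[1],\,N\oplus N'[1],\,u\oplus u'[1])]$).
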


\begin{proof}
  Using direct sum, an arbitrary element can be written as a difference $[(M,N,u)]-[(M',N',u')]$ of generators, which can be rewritten as
  $[(M\oplus M'[1], N \oplus N'[1], u \oplus u'[1])]$.
\end{proof}

\begin{lemma}\label{part2}
  The subsequence $ K_0\Om[0 \to \cN] \to K_0\Om[\cM \xrightarrow F \cN] \to K_0\Om[\cM \to 0] $ of the sequence in Theorem \ref{exseqK00000} is exact.
\end{lemma}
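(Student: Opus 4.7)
By Corollary~\ref{seqiscomplex} the sequence is already a chain complex, so the task reduces to showing that any class killed by the projection $K_0\Om[F] \to K_0\Om[\cM\to 0] \isom K_0\cM$ lies in the image of $K_0\Om[0\to\cN] \isom K_1\cN$. Using the corollary preceding this lemma, any class in $K_0\Om[F]$ can be represented by a single element $(M,N,u) \in B[F]$, and the projection sends $[(M,N,u)]$ to $[\Top M] - [\Bot M] \in K_0\cM$ under the Euler characteristic identification $K_0 qC\cM \isom K_0\cM$. The hypothesis thus reads $[\Top M] = [\Bot M]$ in $K_0\cM$, and the goal is to produce $N' \in B^q\cN$ with $[(M,N,u)] = [(0,N',0)]$.

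The first step of the plan is to reduce to the case where $\Top M$ and $\Bot M$ are single objects of $\cM$ concentrated in degree zero. I intend to do this by a degree-wise filtration argument analogous to those used in the proof of $K_0\Om[1_\cM] = 0$ and in Lemma~\ref{K0omegashift}, invoking additivity in $K_0\Om[F]$ and Lemma~\ref{K0omegashift} itself to absorb the shifts that appear. After this reduction $\Top M$ and $\Bot M$ become objects $M_0, M_0' \in \cM$ with $[M_0] = [M_0']$ in $K_0\cM$, and Corollary~\ref{K0lemmacoro} furnishes exact sequences $E : 0 \to M_0 \to V^0 \to V^1 \to V^2 \to 0$ and $E' : 0 \to M_0' \to V^0 \to V^1 \to V^2 \to 0$ in $\cM$ sharing the same $V^0, V^1, V^2$.

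Applying $F$ yields 5-term exact sequences $FE$ and $FE'$ in $\cN$. Together with the components $\Top u : FM_0 \xrightarrow\sim \Top N$ and $\Bot u : FM_0' \xrightarrow\sim \Bot N$ of the comparison map, they allow me to build a binary complex $N' \in B\cN$ whose underlying graded object stacks the graded object of $N$ with suitable shifts of $FV^0$, $FV^1$, $FV^2$; the top differential encodes $FE$ glued to $d_N$ through $\Top u$, and the bottom differential encodes $FE'$ glued to $d'_N$ through $\Bot u$. Acyclicity of both the top and bottom chain complexes (so that $N' \in B^q\cN$) follows because they are, in effect, the mapping cones of the quasi-isomorphisms $\Top u$ and $\Bot u$ extended by the resolutions $FE$ and $FE'$, and such cones are acyclic in any exact category supporting long exact sequences.

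The main obstacle is to verify $[(M,N,u)] = [(0, N', 0)]$ (possibly up to a shift absorbed by Lemma~\ref{K0omegashift}) as an actual relation in $K_0\Om[F]$. I expect to exhibit this by a short exact sequence $0 \to (M,N,u) \to D \to (0,N',0) \to 0$ in $B[F]$ whose middle term $D$ lies in the image of $\Delta : C[F] \to B[F]$ and is therefore trivial in $K_0\Om[F]$. The delicate point is that the resolution maps $M_0 \to V^\bullet$ and $M_0' \to V^\bullet$ go in the wrong direction to use a direct $p$-equivalence to replace $M$ by a diagonal object; the diagonal extension $D$ must instead be constructed by adjoining the common resolution $V^\bullet$ to $(M,N,u)$. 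Verifying that $D$ is genuinely in the image of $\Delta$, that the admissible monic and epic are compatible with the quasi-isomorphism constraints of $B[F]$, and that the resulting quotient is precisely $(0, N', 0)$, is where the combinatorics of the 5-term resolutions meets the admissibility conditions of $B[F]$.
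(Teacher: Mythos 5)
Your opening moves (reduce to a single generator $[(M,N,u)]$, read the hypothesis as equality of Euler characteristics of $\Top M$ and $\Bot M$ in $K_0\cM$) match the paper, but the pivotal reduction you propose next has a genuine gap. You want to use a degree-wise filtration, ``analogous to'' the ones in the proof that $K_0\Om[1_\cM]=0$ and in Lemma \ref{K0omegashift}, to reduce to the case where $\Top M$ and $\Bot M$ are concentrated in degree zero. But the naive filtration of $(M,N,u)$ does \emph{not} live in $B[F]$: the degree-wise components $u_j$ of the comparison map are not quasi-isomorphisms in general, which is exactly the obstruction the paper points out in the proof of Lemma \ref{K0omegashift} (there the filtration is applied only to the mapping cone of $1_{(M,N,u)}$, whose graded pieces are acyclic, not to $(M,N,u)$ itself). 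Worse, the reduction you aim for is not merely unproved but implausible as stated: if $\Top M$ has homology in several degrees, no manipulation by maps in $p$ (which are quasi-isomorphisms on the source) can replace it by a complex concentrated in degree zero, and no degree-wise splitting of the class is available for the reason just given. Since everything downstream (the use of Corollary \ref{K0lemmacoro} on the two degree-zero objects, the construction of $N'$ from the two $5$-term resolutions) depends on this reduction, the argument does not go through. Your final step is also off: you hope for an extension $0 \to (M,N,u) \to D \to (0,N',0) \to 0$ with $D$ itself in the image of $\Delta$, which is a much stronger requirement than what is needed or available.

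For comparison, the paper exploits the Euler-characteristic hypothesis quite differently: by the elementary Lemma 5.4 of the earlier paper it finds an acyclic pair $X \in C\cM^2$ with $\gr\Top M \oplus \gr\Top X \isom \gr\Bot M \oplus \gr\Bot X$; adding $(X,0,0)$ (which is $p$-equivalent to $0$) and adjusting by isomorphisms makes the top and bottom of the source share the same graded object, so $M = \topbot L$ for some $L \in B\cM$ --- no concentration in a single degree is needed. The preimage in $K_1\cN$ is then $[\Cone'(u)]$, the binary structure on the mapping cone of $u : \topbot FL \to \topbot N$, and the identity $[(M,N,u)] = [(0,\Cone'(u),0)]$ is proved not by a diagonal middle term but by two short exact sequences in $B[F]$ with the \emph{same} middle term $Y = \Cone'(1,u)$ and with third terms $W = (M,FL,1)[-1]$ and $Z = (\Cone(1_M),0,0)$, each of which vanishes in $K_0\Om[F]$ because its naive filtration has diagonal subquotients. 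If you want to salvage your outline, the step to repair first is the reduction: replace ``concentrate the source in degree zero'' by ``equalize the underlying graded objects of $\Top M$ and $\Bot M$ after stabilizing by an acyclic pair,'' which is what the $K_0$ hypothesis actually buys you.
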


\begin{proof}
  By Corollary \ref{seqiscomplex} we know the sequence is a complex.  Using the corollary, consider an arbitrary element of the form
  $[(M,N,u)]$ in $K_0\Om[\cM \xrightarrow F \cN]$ that projects to $0$, where $(M,N,u) \in B[F]$, and try to show it is in the image of the
  previous map.  Identifying $K_0\Om[\cM \to 0]$ with $K_0 \cM$ by Lemma \ref{relKspecial}, we see that $\chi (\Top M) = \chi( \Bot M) \in
  K_0\cM$, where $\chi(C) \defeq \sum_i (-1)^i [C_i]$ for a chain complex $C$.  Using that equality, we refer to the (elementary) proof of
  \cite[Lemma 5.4]{grayson-MR2947948}, which shows something slightly stronger than is written in the statement of the lemma: that there exists
  a pair $X \in C^2\cM$ of acyclic chain complexes such that $\gr \Top M \oplus \gr \Top X \isom \gr \Bot M \oplus \gr \Bot X$.  Acyclicity of
  $X$ tells us that $(X,0,0)$ is in $B[F]$ and that the map $0 \to (X,0,0)$ is in $p$, hence that $[(X,0,0)] = 0$ in $K_0 \Omega[F]$.  We
  replace $(M,N,u)$ by $(M,N,u) \oplus (X,0,0)$ without changing its class in $K_0 \Omega[F]$, arriving at a situation where $\gr \Top M \isom
  \gr \Bot M$.  Replacing the objects of $\Bot M$ by the objects of $\Top M$ and composing the differentials of $\Bot M$ with the appropriate
  isomorphisms, we achieve $\gr \Top M = \gr \Bot M$, without changing the isomorphism class of $\Bot M$.  Hence $M = \topbot L$ for some $L \in
  B\cM$.  The quasi-isomorphism $u : \topbot FL \xrightarrow\sim \topbot N$ has an acyclic mapping cone $\Cone(u) \in C^2\cN$.  Since $\gr \Top
  \Cone(u) = \gr N \oplus \gr F L [-1] = \gr \Bot \Cone(u)$, there is a unique object $\Cone'(u) \in B^q\cN$ with $\topbot \Cone'(u) =
  \Cone(u)$.  Thus we get a class $[(0,\Cone'(u),0)] \in K_0\Om[0 \to \cN]$, the first group of our sequence, and the goal is to show it maps to
  our generator.

  The remark in the previous paragraph about mapping cones applies more generally. Given $X,Y\in B[F]$ and a map $f : \topbot X \to \topbot Y$
  in $C[F^2]$, the mapping cone $\Cone(f)$ is uniquely of the form $\topbot \Cone'(f)$ for some $\Cone'(f) \in B[F]$.

  Let $C'[F]$ be defined just as $C[F]$ was, except that the comparison maps inside each object are not required to be quasi-isomorphisms.  Now
  consider the following exact sequences of pairs in $C'[F^2]$, where each pair is displayed as a vertical arrow.
  \[
  \xymatrix{
    0 \ar[r] & 0 \ar[d]\ar[r] &  (M,\topbot FL,1) \ar[d]^{(1,u)} \ar[r]^1 & (M,\topbot FL,1) \ar[d] \ar[r] & 0 \\
    0 \ar[r] & (M,\topbot N,u) \ar[r]^1 &  (M,\topbot N,u) \ar[r] & 0 \ar[r] & 0 \\
    0 \ar[r] & (0,\topbot FL,0) \ar[d]^u \ar[r] & (M,\topbot FL,1) \ar[d]^{(1,u)} \ar[r] & (M,0,0) \ar[d]^1 \ar[r] & 0 \\
    0 \ar[r] & (0,\topbot N,0)  \ar[r] & (M,\topbot N,u) \ar[r] & (M,0,0) \ar[r] & 0
  }
  \]

  Applying $\Cone$ to each of the pairs above yields the following pair of exact sequences in $C'[F^2]$.
  \[\xymatrixrowsep{7pt}
  \xymatrix{
    0 \ar[r] & (M,\topbot N,u) \ar[r] & Y' \ar[r] & W' \ar[r] & 0 \\
    0 \ar[r] & (0,\Cone(u),0) \ar[r] & Y' \ar[r] & Z' \ar[r] & 0
    }
  \]
  The comparison maps in each of the objects here are quasi-isomorphisms, because in every case, either we have the mapping cone of a
  quasi-isomorphism (in 4 of the 6 cases), or we have the mapping cone of a map between two objects each with a comparison map that is a
  quasi-isomorphism (in 4 of the 6 cases).  Thus the exact sequences lie in $C[F^2]$.

  We see that our exact sequences arise from exact sequences in $B[F]$, in which
  $Y \defeq \Cone'(1,u)$.
  \[\xymatrixrowsep{7pt}
  \xymatrix{
    0 \ar[r] & (M,N,u) \ar[r] & Y \ar[r] & W \defeq (M,FL,1)[-1] \ar[r] & 0 \\
    0 \ar[r] & (0,\Cone'(u),0) \ar[r] & Y \ar[r] & Z \defeq (\Cone(1_M),0,0) \ar[r] & 0
    }
  \]

  The naive filtration of $L$ and $M$ induces filtrations of $W$ and $Z$ with diagonal successive subuotients, so $[W]$ and $[Z]$ vanish in $K_0
  \Omega[F]$.  From additivity and the two exact sequences we deduce that $[(M,N,u)] = [Y] = [(0,\Cone' u,0)]$, yielding the result.
\end{proof}

\section{Facts about the Grothendieck group, part 2}

\begin{lemma}\label{K0lemmaimproved}
  Let $\Delta : v\cD \to w \cM$ be an exact functor between exact categories with weak equivalences, in the sense of \cite[Definition
    A.1]{grayson-MR2947948}, and with a cylinder functor in the sense of \cite[1.6]{waldhausen-MR802796}.  Assume that the cone of any
  isomorphism in $\cM$ has an admissible filtration whose successive subquotients are in the image of $\Delta$ (up to isomorphism).  Consider
  objects $M,M' \in \cM$.  Their classes $[M]$ and $[M']$ in the relative Grothendieck group $K_0[\Delta]$ of the pair $[\Delta]$ are equal
  if and only if there exist objects $V^0, V^1, V^2$ of $\cM$, mapping cones $C$ and $C'$ of maps in $w$, objects $D$ and $D'$ in the
  image of $\Delta$, and exact sequences of the form $E : 0 \to M \oplus C \oplus D \oplus V^0 \to V^1 \to V^2 \to 0$ and $E' : 0 \to M'
  \oplus C' \oplus D' \oplus V^0 \to V^1 \to V^2 \to 0$.
\end{lemma}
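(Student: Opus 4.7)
The plan is to extend the argument of Lemma~\ref{K0lemma} to the relative setting by enlarging the equivalence relation on isomorphism classes of $\cM$ so that it automatically respects both families of extra defining relations in $K_0[\Delta]$---vanishing of the classes of objects in the image of $\Delta$, and equality of source and target of maps in $w$. For the easy direction, additivity in $K_0\cM$ applied to $E$ and $E'$ gives
\[
  [M] + [C] + [D] + [V^0] \;=\; [V^1] - [V^2] \;=\; [M'] + [C'] + [D'] + [V^0];
\]
descending to $K_0[\Delta]$ kills $[D]$ and $[D']$ as images of $\Delta$ and kills $[C]$ and $[C']$ because the mapping cone of a weak equivalence is weakly equivalent to zero via the cylinder functor, and cancelling $[V^0]$ yields $[M]=[M']$.

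For the harder direction I would define $M \sim M'$ to mean that the data of the lemma exist, and then reproduce the architecture of the proof of Lemma~\ref{K0lemma}. Reflexivity, symmetry, additivity, and cancellation all go through by filling the $C, D$ slots with zero and enlarging $V^0$ suitably. Compatibility with isomorphism is precisely where the hypothesis on cones of isomorphisms enters: the admissible filtration of the cone of an isomorphism $M \xrightarrow\sim M'$ breaks it into pieces absorbable in the $D$-slot, producing the exact sequences that certify $M \sim M'$. Transitivity is obtained exactly as in Lemma~\ref{K0lemma}, from additivity and cancellation.

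Next I would verify that $\sim$ incorporates the two extra defining relations of $K_0[\Delta]$. For $D$ in the image of $\Delta$, the sequences $E : 0 \to D \to D \to 0 \to 0$ (with $D$ itself occupying the $D$-slot) and $E' : 0 \to 0 \to D \to D \to 0$ give $D \sim 0$ directly. For a map $f : X \to Y$ in $w$, the cylinder functor factors $f$ as $X \rightarrowtail T(f) \twoheadrightarrow C(f)$ with $T(f) \simeq Y$, while the back cofibration $Y \rightarrowtail T(f)$ has a cofiber that is again a cone of a weak equivalence; combining these two cofiber sequences inside a shared $V^1, V^2$ produces $E$ and $E'$ that absorb the discrepancy between $X$ and $Y$ into the cone slots. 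Arranging this balancing inside a single pair of short exact sequences, rather than merely at the level of $K_0$, is the main obstacle I anticipate, since the cone decomposition only gives the desired equality at the $K_0$ level and has to be lifted to honest short exact sequences in $\cM$.

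With $\sim$ established as a cancellative additive equivalence relation that respects every defining relation of $K_0[\Delta]$, the monoid of $\sim$-classes is cancellative and embeds into its Grothendieck group $G$. The map $G \to K_0[\Delta]$ sending $(\langle M\rangle,\langle M'\rangle) \mapsto [M]-[M']$ and the map $K_0[\Delta] \to G$ sending $[M] \mapsto \langle M\rangle$ (well-defined by the verifications above) are mutually inverse, as checked on generators exactly as in Lemma~\ref{K0lemma}, and the desired characterization of the equality $[M]=[M']$ in $K_0[\Delta]$ follows.
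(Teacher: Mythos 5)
Your overall architecture (rerun the monoid argument of \ref{K0lemma} with an enlarged relation $\sim$) is a workable alternative to the paper's route, which instead describes the subgroup $H$ of \ref{relKdefquot} as generated by classes $[F(D)]_0$ and $[\Cone f]_0$ with $f \in w$, combines like terms by direct sums to get $[M\oplus C\oplus D]_0=[M'\oplus C'\oplus D']_0$ in $K_0\cM$, and then simply invokes \ref{K0lemma}. But as written your proposal has genuine gaps, and the most serious one is in the direction you call easy. You kill $[C]$ and $[C']$ in $K_0[\Delta]$ by asserting that the mapping cone of a weak equivalence is weakly equivalent to zero ``via the cylinder functor.'' That is the cylinder \emph{axiom}, which is not among the hypotheses and fails in the intended application: for $p=(q,i)$ on $C[F]$ or $B[F]$, a map $0\to C$ or $C\to 0$ in $p$ would have to be an isomorphism on the target component, while the target component of the cone of a $p$-map is a nonzero acyclic complex. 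The correct argument is exactly where the filtration hypothesis must enter: comparing $0\to M\to \Cone f\to M'[-1]\to 0$ with $0\to M'\to \Cone 1_{M'}\to M'[-1]\to 0$, the hypothesis (applied to the isomorphism $1_{M'}$) kills $[\Cone 1_{M'}]$ in $K_0[\Delta]$, whence $[\Cone f]=[M]-[M']$, which vanishes when $f\in w$. Instead, you invoke the filtration hypothesis at ``compatibility with isomorphism,'' where it is neither needed (an isomorphism $\phi:M\to M'$ gives $M\sim M'$ directly from $0\to M\xrightarrow{\phi}M'\to 0\to 0$ and $0\to M'\xrightarrow{1}M'\to 0\to 0$, as in \ref{K0lemma}) nor usable as described, since the $D$-slot accepts an object in the image of $\Delta$, not an object merely admitting a filtration with such subquotients.

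Second, the step you yourself flag as the main obstacle --- showing $\langle M\rangle=\langle M'\rangle$ in your monoid for a map $f:M'\to M$ in $w$ by honest exact sequences rather than a $K_0$ identity --- is left unproved, and your proposed factorization through the cylinder produces sequences with different middle and quotient terms, so it does not by itself yield the shared $V^1,V^2$ your relation requires. The step can in fact be completed: the two cone sequences above share the quotient $M'[-1]$, so padding gives
\[
0\to M\oplus \Cone 1_{M'}\to \Cone f\oplus \Cone 1_{M'}\to M'[-1]\to 0
\quad\text{and}\quad
0\to M'\oplus \Cone f\to \Cone f\oplus \Cone 1_{M'}\to M'[-1]\to 0,
\]
with $\Cone 1_{M'}$ and $\Cone f$ occupying the $C$- and $C'$-slots (both $1_{M'}$ and $f$ lie in $w$). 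With that repair, and with the easy direction rewritten to use the filtration hypothesis as above (this is also the point of the paper's remark that the filtration assumption substitutes for the missing cylinder axiom), your monoid-theoretic route goes through; without them the proof is incomplete.
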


\begin{remark}
  A cylinder functor supports also the construction of cones and suspensions.  If $w\cM$ is the category of chain complexes over an exact
  category, then the mapping cylinder for chain complexes provides a cylinder functor, and if $w$ contains the quasi-isomorphisms, then the
  cylinder axiom holds; the cone is defined as the cokernel of the front inclusion into the cylinder and agrees with the mapping cone; and the
  suspension of $M$ is defined as the cone of $M \to 0$ and agrees with the functor $M \mapsto M[-1]$. The assumption about filtrations of
  mapping cones in the lemma is a substitute in our context for the cylinder axiom.
\end{remark}

\begin{remark}\label{relKdefquot}
  The Grothendieck group $K_0[\Delta]$ is the quotient of $K_0\cM$ by the subgroup $H$ generated by all elements of the form $[F(D)]$ for some
  $D \in \cD$ together with all elements of the form $[M] - [M']$ for some $M' \to M$ in $w$.  It is independent of $v$.  For the elementary
  purposes of this paper, that may be taken as the definition.
\end{remark}

\begin{proof}
  Given $f : M' \to M$ in $\cM$, consider the following pair of exact sequences.
  \[\xymatrixrowsep{7pt}
  \xymatrix {
    0 \ar[r] &  M \ar[r] &  \Cone f \ar[r] & M'[-1] \ar[r] & 0 \\
    0 \ar[r] &  M' \ar[r] &  \Cone 1_{M'} \ar[r] & M'[-1] \ar[r] & 0
    }
  \]
  By the hypothesis on filtrations, we know $[\Cone 1_{M'}]$ vanishes in $K_0[\Delta]$, so by additivity, $[\Cone f] = [M] - [M']$.  In
  particular, cones of maps in $w$ vanish in $K_0[\Delta]$, establishing the leftward implication.

  Write $K_0[\Delta]$ as the quotient $(K_0\cM)/H$, where $H$ is the subgroup described by \ref{relKdefquot}, and use the notation $[M]_0$
  temporarily to denote the class of $M$ in $K_0 \cM$, thereby distinguishing it from $[M] \in K_0[\Delta]$.  

  A further consequence of the hypothesis on filtrations and the argument above is that we may regard $H$ as being generated by all elements of
  the form $[F(D)]_0$ for some $D \in \cD$ together with all elements of the form $[\Cone f]_0$ for some $f \in w$.

  To prove the rightward implication, consider objects $M,M' \in \cM$ and suppose $[M]=[M']$.  It follows that $[M]_0-[M']_0 \in H$ and can be
  written as a linear combination of generators.  Using direct sums to combine like terms with coefficients of the same sign in the linear
  combination, we may write $[M]_0-[M']_0 = [D']_0 - [D] + [C']_0 - [C]_0$ for some objects $D$ and $D'$ in the image of $\Delta$ and for some
  mapping cones $C$ and $C'$ of maps in $w$.  Thus $[M \oplus C \oplus D]_0 = [M' \oplus C' \oplus D']_0$, and an application of
  \ref{K0lemma} gives the result.
\end{proof}

\section{Facts about \texorpdfstring{$K_1$}{K sub 1}}

For forming the total complex of a complex $N$ of complexes, or for forming the total binary complex of a binary complex of complexes, and so
on, we will fix the following convention for adjusting the signs of the differentials to produce the differential of the total complex: we alter
the sign of the differential of the complex $N_i$ by $(-1)^i$.  That is equivalent to rewriting the chain complex $ \dots \to N_i \xrightarrow d
N_{i-1} \to \dots$ where each $d$ is a chain map of degree $0$ as a chain complex $ \dots \to N_i[-i] \xrightarrow d N_{i-1}[-i+1] \to \dots$
where each $d$ is a chain map of degree $-1$ obeying the usual sign rule.  Observe that any chain complex $(M,d)$ is isomorphic to $(M,-d)$, and
isomorphic objects have the same class in Grothendieck groups, so the change of sign causes no problem in practice.

\begin{lemma}\label{K1shift}
  Given $N \in B^q\cN$, the equation $[N[i]] = (-1)^i [N]$ holds in $K_1\cN$.
\end{lemma}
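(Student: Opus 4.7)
The plan is to follow the same strategy as Lemma \ref{K0omegashift}, adapted to $K_1\cN \isom K_0\Om\cN$. By induction on $|i|$, it suffices to treat the case $i = -1$.

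First I consider the mapping cone $C \defeq \Cone(1_N) \in B^q\cN$, which lies in $B^q\cN$ because cones of identity maps are acyclic with respect to both differentials. The standard short exact sequence $0 \to N \to C \to N[-1] \to 0$ lies in $B^q\cN$, so additivity reduces the goal to showing that $[C] = 0$ in $K_1\cN$.

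For this, I filter $C$ by $F_k C \defeq \Cone(1_{F_k N})$, where $F_k N$ denotes the naive degree-$\leq k$ truncation of $N$. Each $F_k C$ is itself the cone of an identity and so belongs to $B^q\cN$, and the successive subquotient $F_k C / F_{k-1} C \isom \Cone(1_{N_k[-k]})$ is diagonal because $N_k[-k]$ has zero differentials, forcing the top and bottom differentials of its identity cone to coincide. Diagonal objects vanish in $K_0\Om\cN$ by construction, so summing over the filtration yields $[C] = 0$ and hence $[N[-1]] = -[N]$.

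As a shortcut, one could instead invoke Lemma \ref{relKspecial} to identify $K_1\cN$ with $K_0\Om[0\to\cN]$ via $[N] \leftrightarrow [(0,N,0)]$, and then apply Lemma \ref{K0omegashift} to the functor $0\to\cN$ directly, since $(0,N,0)[i] = (0,N[i],0)$. No step presents a genuine obstacle; the only mild bookkeeping is the check that the cone of the identity on a single-degree complex is diagonal, which is immediate once the sign conventions for cone differentials are unwound.
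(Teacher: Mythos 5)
Your proposal is correct and matches the paper: the ``shortcut'' you mention at the end is precisely the paper's entire proof (apply Lemma \ref{K0omegashift} to the exact functor $0 \to \cN$, using the identification of Lemma \ref{relKspecial}). Your direct cone-of-the-identity argument with the naive filtration is just the proof of Lemma \ref{K0omegashift} specialized to this case (where the comparison-map issue disappears), so it is the same approach carried out by hand and it goes through as you describe.
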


\begin{proof}
  Apply \ref{K0omegashift} to the exact functor $0 \to \cN$.
\end{proof}

An acyclic binary chain complex $ \xymatrix {N_1 \ar[r]<2pt> \ar[r]<-2pt> & N_0 } $ of length 1 is essentially the same
thing as an automorphism $N_0 \xrightarrow\isom N_0$.  We formalize that as follows.

\begin{definition}\label{K1autodef}
  If $\theta$ is an automorphism of an object $N$ of an exact category $\cN$, let $[\theta] \defeq [\cA(\theta)] \in K_1\cN$, where $\cA(\theta)$
  is the acyclic binary complex of $\cN$ with copies of $N$ in degrees $0$ and $1$, with top differential $\theta : N \to N$, and with bottom
  differential $1 : N \to N$.
\end{definition}

Any acyclic binary chain complex of $\cN$ of the form $\xymatrix{N_1 \ar[r] <2pt> ^{f} \ar[r] <-2pt> _{g} & N_0}$ is isomorphic to $ \cA( f \circ
g^{-1} )$.

\begin{definition}\label{elemauto}
  An automorphism of an object $N$ of an exact category $\cN$ is called {\em elementary} if it has the form $\begin{psmallmatrix} 1 & * \\ 0 &
    1 \end{psmallmatrix}$ with respect to some direct sum decomposition $N \isom N' \oplus N''$ of $N$.
\end{definition}

\begin{lemma}\label{K1autoadd}
  Let $\cN$ be an exact category, and let $N$ be an object of $\cN$.
  \begin{enumerate}
    \item
      If $\theta$ and $\psi$ are automorphisms of $N$, then $[\theta \psi] = [\theta] + [\psi] \in K_1 \cN$.
    \item
      If $\theta$ is a product of elementary automorphisms, then $[\theta] = 0$.
  \end{enumerate}
\end{lemma}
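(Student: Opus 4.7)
The plan is to handle a single elementary automorphism in (2) first, then use it and additivity in $B^q\cN$ to prove (1), and finally deduce (2) for arbitrary products from (1). The common ingredient is additivity in $K_0\Om\cN$: since $K_0\Om\cN$ is the quotient of $K_0(B^q\cN)$ by the image of $\Delta$ (the diagonal binary complexes) and $K_0$ is additive over admissible short exact sequences, every admissible short exact sequence in $B^q\cN$ descends to a relation $[X] = [X'] + [X'']$ in $K_0\Om\cN$, with diagonal subquotients contributing zero.

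I first dispose of the single-elementary case of (2). For $\theta = \begin{psmallmatrix} 1 & \phi \\ 0 & 1 \end{psmallmatrix}$ on $N \isom N' \oplus N''$, the first-summand inclusion $N' \hookrightarrow N$, applied at both degrees of $\cA(\theta)$, is compatible with $\theta$ as top differential (since $\theta$ fixes $N'$ pointwise) and with $1_N$ as bottom differential. This yields an admissible short exact sequence $0 \to \cA(1_{N'}) \to \cA(\theta) \to \cA(1_{N''}) \to 0$ in $B^q\cN$.  Both the sub and quotient have top differential equal to bottom differential (both being the identity), hence are diagonal and vanish in $K_0\Om\cN$; additivity gives $[\cA(\theta)] = 0$.

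For (1), I construct an acyclic binary complex $E$ on $N \oplus N$ fitting into an admissible short exact sequence $0 \to \cA(\psi) \to E \to \cA(\theta) \to 0$ by choosing a top differential that is upper triangular with diagonal entries $\psi,\theta$ and bottom differential equal to the identity; additivity then yields $[E] = [\cA(\psi)] + [\cA(\theta)]$. The main step, and the expected main obstacle, is the independent identification $[E] = [\cA(\theta\psi)]$ in $K_0\Om\cN$. My plan is to stabilize $E$ by direct summing with $\cA(\eta)$ for a suitable product $\eta$ of elementary automorphisms (whose class vanishes by the preceding paragraph together with basic additivity of $\cA$ over direct sums), and then to exhibit an explicit isomorphism of binary complexes from the enlarged $E$ to $\cA(\theta\psi)$ plus a diagonal binary complex. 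This identification amounts to a Whitehead-lemma-type stabilization, relating $\mathrm{diag}(\theta,\psi) \oplus I_k$ to $\mathrm{diag}(\theta\psi,1) \oplus I_k$ via conjugation by a product of elementary automorphisms once $k$ is large enough; the noncommutativity of a general exact category blocks the naive rank-$2$ conjugation available in the commutative case, which is why stabilization is expected to be essential. With (1) in hand, (2) for an arbitrary product $\theta = \theta_1\cdots\theta_k$ of elementary automorphisms follows by $k$-fold application of (1): $[\cA(\theta)] = \sum_i [\cA(\theta_i)] = 0$.
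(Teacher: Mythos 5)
Your reduction of (2) to a single elementary automorphism via (1), and your treatment of that single case by the exact sequence $0 \to \cA(1_{N'}) \to \cA(\theta) \to \cA(1_{N''}) \to 0$ with diagonal sub and quotient, are correct and coincide with the paper's argument. The gap is in your proof of (1), and it is fatal as stated. An isomorphism of binary complexes $\cA(\alpha) \isom \cA(\beta)$ is a single map in each degree commuting with \emph{both} differentials; since both bottom differentials are identities, this forces the two degrees' maps to agree and hence forces $\alpha$ and $\beta$ to be \emph{conjugate}. Allowing an extra diagonal summand $D$ (top differential $=$ bottom differential $=d$) does not help: writing out the compatibility with the bottom differentials and substituting into the top, the existence of an isomorphism from the stabilized $E$ to $\cA(\theta\psi)\oplus D$ again forces $\mathrm{diag}(\psi,\theta)\oplus\eta$ to be conjugate to $\theta\psi\oplus 1$. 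That stable conjugacy is false in general: the Whitehead lemma gives $\mathrm{diag}(\theta,\psi)=\mathrm{diag}(\theta\psi,1)\cdot(\text{product of elementaries})$, a \emph{multiplicative} relation, not a conjugacy, and conjugacy invariants obstruct your version --- e.g.\ for finite-dimensional vector spaces with $\theta=\psi=-1$ on a one-dimensional $N$, no automorphism of the form $\mathrm{diag}(-1,-1)\oplus\eta$ is conjugate to an identity map $\mathrm{diag}(1,1)\oplus 1_k$, no matter how large the stabilization. And you cannot instead invoke the multiplicative Whitehead relation to conclude $[\cA(\mathrm{diag}(\psi,\theta))]=[\cA(\theta\psi)]$, because ``multiplying by a product of elementaries does not change the class'' is exactly the content of (1) plus (2) --- that would be circular.

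What is missing is a mechanism that goes beyond isomorphisms and direct sums of length-$1$ binary complexes. The paper's proof of (1) builds a single acyclic binary complex $K$ of length $2$ as the total complex of two bicomplexes on the same $\ZZ^2$-graded object (columns carrying $\psi$, $\theta\psi$ and $1$'s, rows carrying $-1$, $\theta$ and $1$'s), and computes its class twice: filtering by rows, one row is diagonal and the other gives $[K]=[\theta]$; filtering by columns and using the shift relation $[N[1]]=-[N]$ (Lemma \ref{K1shift}, itself a consequence of \ref{K0omegashift}), one gets $[K]=[\theta\psi]-[\psi]$. The two filtrations of one object, together with the sign from the shift, are what produce the multiplicative relation; your construction never introduces anything playing the role of the shift or of a two-step filtration, so it cannot reach (1). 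If you want to salvage your outline, replace the ``stable conjugation'' step by such a two-filtration argument (or prove and use the shift relation directly); as written, the main step fails.
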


\begin{proof}
  For (1), consider the following pair of bicomplexes in $\cN$ with the same underlying $\ZZ^2$-graded object, concentrated in degrees $\{0,1\}
    \times \{0,1\}$.
  \begin{mathparpagebreakable}
    \xymatrix{ N \ar[r]^{-1}\ar[d]^\psi & N \ar[d]^{\theta \psi} \\ N \ar[r]^\theta & N } \and
    \xymatrix{ N \ar[r]^{-1}\ar[d]^1 & N \ar[d]^{1} \\ N \ar[r]^1 & N } 
  \end{mathparpagebreakable}
  (The diagrams above anticommute, as they ought to in a bicomplex.)
  The total complexes give a single object $K$ of $B^q\cN$.  Filter by rows (one of which is diagonal) to see that $[K] = [\theta] \in K_1 \cN$.
  Filter by columns and apply \ref{K1shift} to see that $[K] = [\theta \psi ] - [\psi]$, yielding the result.

  For (2), by (1) we may assume $\theta$ is elementary with respect to a direct sum decomposition $N \isom N' \oplus N''$ of $N$.  Additivity
  over the short exact sequence $ 0 \to \cA ( 1_{N'} ) \to \cA ( \theta ) \to \cA ( 1_{N''} ) \to 0 $ gives the result.
\end{proof}

\begin{lemma}\label{K1topbotiso}
  Suppose $M$ and $N$ are objects of $B^q\cN$, and $f : \Top M \xrightarrow \isom \Top N$ and $g : \Bot M \xrightarrow \isom \Bot N$ are
  isomorphisms of complexes.  Then the following equation holds in $K_1 \cN$.
  \[
    [N]-[M] = \sum_i (-1)^i [ f_i g_i^{-1}]
  \]
\end{lemma}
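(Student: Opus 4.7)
The plan is to construct a single acyclic binary complex $K \in B^q\cN$ assembled from $M$, $N$, $f$, and $g$, and to compute its class in $K_1\cN$ in two different ways: once via a mapping-cone short exact sequence relating $K$ to $M$ and $N$ directly, and once via a ``column'' filtration whose successive subquotients realize the automorphisms $f_i g_i^{-1}$.

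First I form the double mapping cone $K$: the binary complex with underlying graded object $N \oplus M[-1]$, whose top differential is that of $\Cone(f : \Top M \to \Top N)$ and whose bottom differential is that of $\Cone(g : \Bot M \to \Bot N)$. Since $f$ and $g$ are isomorphisms of chain complexes, both cones are acyclic, so $K \in B^q\cN$. The standard cone inclusion and projection have the same underlying graded maps for the top and bottom structures, so they lift to a short exact sequence $0 \to N \to K \to M[-1] \to 0$ in $B^q\cN$. Additivity in $K_1\cN$ together with Lemma \ref{K1shift} then yields $[K] = [N] + [M[-1]] = [N] - [M]$.

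Second I introduce the filtration $F_pK \subseteq K$ defined degree-wise by $F_pK_i = N_i \oplus M_{i-1}$ for $i \leq p$, $F_pK_{p+1} = M_p$ (the second summand only), and $F_pK_i = 0$ for $i > p+1$. The block-upper-triangular shape of the cone differentials, with $f_p$ and $g_p$ in the off-diagonal positions at degree $p+1$, makes each $F_pK$ a sub-binary-complex of $K$. The successive subquotient $F_pK / F_{p-1}K$ is concentrated in degrees $p$ and $p+1$, with $N_p$ in degree $p$ and $M_p$ in degree $p+1$, top differential $f_p$, and bottom differential $g_p$; since both $f_p$ and $g_p$ are isomorphisms, this subquotient is acyclic (so the whole filtration lies in $B^q\cN$), and the degree-wise iso $(1_{N_p}, g_p^{-1})$ identifies it with $\cA(f_p g_p^{-1})[p]$. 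Hence Lemma \ref{K1shift} gives $[F_pK/F_{p-1}K] = (-1)^p [f_p g_p^{-1}]$, and additivity over the full filtration yields $[K] = \sum_p (-1)^p [f_p g_p^{-1}]$.

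Combining the two computations of $[K]$ produces the desired identity $[N] - [M] = \sum_i (-1)^i [f_i g_i^{-1}]$. The only delicate point is sign and shift bookkeeping: verifying that the column filtration really is respected by both top and bottom cone differentials (which uses only their block-upper-triangular form), and that the identification of $F_pK / F_{p-1}K$ with $\cA(f_p g_p^{-1})[p]$ carries exactly the shift needed for Lemma \ref{K1shift} to produce the sign $(-1)^p$ rather than its negative. I do not anticipate any deeper obstacle.
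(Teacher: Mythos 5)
Your proposal is correct and is essentially the paper's own argument: your double mapping cone $K$ (top $=\Cone(f)$, bottom $=\Cone(g)$) is exactly the total binary complex of the paper's two-column bicomplex, your cone exact sequence $0 \to N \to K \to M[-1] \to 0$ is its column filtration, and your filtration $F_pK$ is its row filtration, with the subquotients identified with $\cA(f_ig_i^{-1})$ shifted into degrees $\{i,i+1\}$ just as in the paper. The sign and shift bookkeeping you flag works out, since $(-1)^{i}=(-1)^{-i}$ makes the shift-direction convention immaterial.
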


\begin{proof}
  Form two complexes of complexes of $\cN$ with the same underlying $\ZZ^2$-graded object that have $M$ in
  column 1 and $N$ in column 0, using $f$ and the top differentials of $M$ and $N$ in the first and using $g$ and the bottom differentials in
  the second.  All rows and columns are acyclic, so the total complexes form an object $K$ of $B^q\cN$.  Filter by columns and apply
  \ref{K1shift} to see that $[K] = [N] - [M]$.  Filter by rows to get acyclic binary complexes of length 1 of the form $\xymatrix{M_i \ar[r]
    <2pt> ^{f_i} \ar[r] <-2pt> _{g_i} & N_i}$ supported in degrees $\{i, i+1\}$, which are in turn isomorphic to acyclic binary complexes of the
  form $\cA(f_i g_i^{-1})[-i]$.
\end{proof}

\begin{lemma}\label{K1diffrot}
  Suppose that $d_1, \dots, d_n$ are acyclic differentials on a graded object $N$ of $\cN$, and let $\sigma$ be a permutation of the set
  $\{1,\dots,n\}$.  Then the following equation holds in $K_1\cN$.
  \[
    [(N,d_1,d_{\sigma 1})] + [(N,d_2,d_{\sigma 2})] + \dots + [(N,d_n,d_{\sigma n})] = 0
    \]
\end{lemma}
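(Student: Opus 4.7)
The plan is to reduce everything to the cyclic three-term identity
\[
 [(N,d_1,d_2)] + [(N,d_2,d_3)] + [(N,d_3,d_1)] = 0 \quad \text{in } K_1\cN,
\]
valid for any three acyclic differentials $d_1,d_2,d_3$ on $N$. I would prove this identity via \ref{K1topbotiso}, then derive antisymmetry and a cocycle relation from it, and conclude by decomposing $\sigma$ into disjoint cycles.

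For the three-term identity I would take $M = (N,d_1,d_2) \oplus (N,d_2,d_3) \oplus (N,d_3,d_1)$ and $M' = \Delta(\Top M)$, which is diagonal so $[M']=0$. The chain complex $\Top M$ equals $\Top M'$ on the nose, so take $f=1$; the bottoms $\Bot M = (N,d_2)\oplus(N,d_3)\oplus(N,d_1)$ and $\Bot M' = (N,d_1)\oplus(N,d_2)\oplus(N,d_3)$ differ by a cyclic permutation of summands, and that permutation $g$ is an isomorphism of chain complexes because it carries each differential to an equal one. In each degree $k$, the component $g_k$ acts on $N_k\oplus N_k\oplus N_k$ as the $3$-cycle permutation matrix, which is a product of two transpositions. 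By \ref{K1autoadd}(1), any transposition $t$ on $X\oplus X$ satisfies $2[t]=[t^2]=[1]=0$, so the $3$-cycle has class $0$. Hence $[f_k g_k^{-1}]=0$ for every $k$, and \ref{K1topbotiso} yields $-[M]=[M']-[M]=0$.

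Specializing the three-term identity to $d_3=d_1$ and using that $(N,d_1,d_1)$ is diagonal gives antisymmetry $[(N,d,d')]+[(N,d',d)]=0$; combining this with the three-term identity produces the cocycle relation $[(N,d,d')]+[(N,d',d'')]=[(N,d,d'')]$. I would then decompose $\sigma$ into disjoint cycles. A fixed point of $\sigma$ contributes a diagonal term of class $0$; for a cycle $(i_1,\ldots,i_k)$ of length $k\geq 2$, iterated cocycle telescopes the initial segment $\sum_{j=1}^{k-1}[(N,d_{i_j},d_{i_{j+1}})]$ down to $[(N,d_{i_1},d_{i_k})]$, whose closing partner $[(N,d_{i_k},d_{i_1})]$ cancels it by antisymmetry. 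Summing over cycles yields the lemma.

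The main obstacle will be the three-term identity: if one applies \ref{K1topbotiso} directly to the full sum $\bigoplus_i (N,d_i,d_{\sigma i})$, there is a leftover $\sum_k (-1)^k[-1_{N_k}]$ whenever $\sigma$ is odd, and dispatching this residual would demand a separate argument that $[-1]$ is additive over short exact sequences together with vanishing of the Euler characteristic of an acyclic complex in $K_0\cN$. Routing through the \emph{even} $3$-cycle permutation, where the residual collapses automatically from $2[t]=0$, avoids that detour entirely.
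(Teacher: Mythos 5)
Your argument is correct, but it reaches the conclusion by a genuinely different route than the paper. The paper reduces to a single cycle of order $n$ and applies \ref{K1topbotiso} just once, taking $f=1$ on tops and, for $g$, the \emph{signed} block permutation matrix $S$; the sign is inserted precisely so that $S$ is a product of elementary automorphisms, whence \ref{K1autoadd}(2) kills the correction terms $[f_ig_i^{-1}]$ uniformly, whether the cycle is even or odd. You instead apply \ref{K1topbotiso} only for a $3$-cycle, where the \emph{unsigned} permutation suffices because it is even, and then recover the general case formally: antisymmetry by setting $d_3=d_1$, the cocycle relation from antisymmetry plus the three-term identity, and telescoping over the disjoint cycles of $\sigma$. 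What your route buys is the complete avoidance of signed matrices and of \ref{K1autoadd}(2): you use only part (1) together with the vanishing of diagonal classes, and your residual-sign analysis correctly identifies why the naive unsigned approach fails for odd $\sigma$. What the paper's route buys is a single matrix computation disposing of an arbitrary cycle at one stroke, and the elementary-matrix mechanism it relies on is needed again later (in the proof of \ref{part3}), so no machinery is saved globally. One step to tighten: a $3$-cycle is a product of two \emph{distinct} transpositions $t_1t_2$, so $[t_1]+[t_2]=0$ does not literally follow from $2[t]=0$ alone; you also need $[t_1]=[t_2]$, which is immediate either because conjugate automorphisms have equal classes (by \ref{K1autoadd}(1), using $[\alpha]+[\alpha^{-1}]=[\alpha\alpha^{-1}]=[1]=0$, with $[1]=0$ since $\cA(1)$ is diagonal) or because each $[t_j]$ equals the class of the swap on $N_k\oplus N_k$ by additivity over the evident direct sum decomposition; with that half-line added, your proof is complete.
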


\begin{proof}
  A permutation can be written as a product of disjoint cycles, so by treating each cycle separately we may assume $\sigma$ is a cycle of order
  $n$.  By renumbering, we may assume $\sigma 1 = 2$, $\sigma 2 = 3$, \dots, $\sigma n = 1$.

  The signed block permutation matrix 
  \[
  S \defeq 
  \begin{psmallmatrix}
    0 & -1 & \dots & 0 & 0 \\
    0 & 0  & \dots & 0 & 0 \\
    \vdots & \vdots  & \ddots & \vdots & \vdots \\
    0 & 0  & \dots & 0 & -1 \\
    1 & 0  & \dots & 0 & 0
  \end{psmallmatrix}
  \]
  gives an automorphism of $N^n$ that is a product of elementary matrices.  (For example, when $n=2$, one uses the following equation.
  \[
  \begin{pmatrix} 0 & -1 \\ 1 & 0 \end{pmatrix}
  =
  \begin{pmatrix} 1&0\\1&1 \end{pmatrix}
  \begin{pmatrix} 1&-1\\0&1 \end{pmatrix}
  \begin{pmatrix} 1&0\\1&1 \end{pmatrix}
  \]
  The general case follows from the case for $n=2$ by writing $S$ as a product of signed transpositions.)
  Moreover, $S$ satisfies the equation 
  \[
  S ( d_1 \oplus \dots \oplus d_n ) = 
  \begin{psmallmatrix}
    0 & -d_2 & \dots & 0 & 0 \\
    0 & 0  & \dots & 0 & 0 \\
    \vdots & \vdots  & \ddots & \vdots & \vdots \\
    0 & 0  & \dots & 0 & -d_n \\
    d_1 & 0  & \dots & 0 & 0
  \end{psmallmatrix}
  = ( d_2 \oplus \dots \oplus d_n \oplus d_1 ) S
  \]
  and thus gives an isomorphism $S : (N, d_1 \oplus \dots \oplus d_n) \xrightarrow \isom (N, d_2 \oplus \dots \oplus d_n \oplus d_1) $ in $C^q
  \cN$.  Now apply \ref{K1topbotiso} to the objects $(N^n,d_1 \oplus \dots \oplus d_n,d_1 \oplus \dots \oplus d_n)$
  and $(N^n,d_1 \oplus \dots \oplus d_n,d_2 \oplus \dots \oplus d_n \oplus d_1)$ along with \ref{K1autoadd} to get 
  $
  [(N,d_1,d_2)] + [(N,d_2,d_3)] + \dots + [(N,d_{n-1},d_n)] + [(N,d_n,d_1)] = [(N^n,d_1 \oplus \dots \oplus d_n,d_2 \oplus \dots \oplus d_n
    \oplus d_1)] = [(N^n,d_1 \oplus \dots \oplus d_n,d_1 \oplus \dots \oplus d_n)] = 0.$
\end{proof}

\begin{corollary}\label{K1sw}
  If $N \in B^q \cN$, then $[\sw N] = -[N]$ in $K_1\cN$.
\end{corollary}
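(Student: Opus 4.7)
The plan is to apply Lemma \ref{K1diffrot} directly, with $n = 2$ and $\sigma$ the unique nontrivial permutation of $\{1,2\}$. Writing $N = (N,d,d')$ and taking $d_1 \defeq d$, $d_2 \defeq d'$, both of which are acyclic differentials on the underlying graded object of $N$ (since $N \in B^q\cN$), the lemma yields
\[
    [(N, d_1, d_{\sigma 1})] + [(N, d_2, d_{\sigma 2})] = [(N, d, d')] + [(N, d', d)] = 0
\]
in $K_1 \cN$. Since $(N,d,d') = N$ and $(N,d',d) = \sw N$, this is exactly $[N] + [\sw N] = 0$, so $[\sw N] = -[N]$, as desired.

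There is no real obstacle here: the corollary is the $n=2$ special case of the preceding lemma, and the only thing to check is the bookkeeping of indices in the permutation. The substantive content has already been absorbed into Lemma \ref{K1diffrot} (which in turn rests on \ref{K1topbotiso}, \ref{K1autoadd}, and the explicit factorization of the $2\times 2$ signed transposition into elementary matrices exhibited in its proof).
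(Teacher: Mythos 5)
Your proof is correct and is exactly the paper's argument: the corollary is the $n=2$ case of Lemma \ref{K1diffrot}, applied with $d_1 = d$, $d_2 = d'$ and $\sigma$ the transposition, giving $[N] + [\sw N] = 0$. Nothing further is needed.
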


\begin{proof}
  This follows from the case $n=2$ of the lemma.
\end{proof}

\section{Proof of exactness, part 3}

Let $\cM \xrightarrow F \cN$ be an exact functor between exact categories.

\begin{lemma}\label{TotBCF}
  Given $E \in BC[F]$ such that $\Tot E \in B[F]$ is acyclic, the classes $[\Tot E_\src] \in K_1 \cM$ and $[\Tot E_\tar] \in K_1 \cN$ are
  related by the equation $F [\Tot E_\src] = [\Tot E_\tar]$.
\end{lemma}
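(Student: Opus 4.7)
The plan is to show that the mapping cone of the comparison map $\Tot u$ packages the difference $[\Tot E_\tar] - F[\Tot E_\src]$ into a single class in $K_1\cN$, and then to show that class vanishes by a filtration argument in the binary direction.

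First I form the binary cone. The comparison map $\Tot u : F\Tot E_\src \to \topbot \Tot E_\tar$ is a quasi-isomorphism in $C\cN^2$, and its top and bottom component mapping cones share the underlying graded object $\gr \Tot E_\tar \oplus (\gr F\Tot E_\src)[-1]$; they therefore combine into a single object $\Cone'(\Tot u) \in B^q\cN$, exactly as in the proof of Lemma~\ref{part2}. The cone short exact sequence
\[
0 \to \Tot E_\tar \to \Cone'(\Tot u) \to F\Tot E_\src[-1] \to 0
\]
in $B^q\cN$, combined with additivity in $K_1\cN$ and Lemma~\ref{K1shift}, gives $[\Cone'(\Tot u)] = [\Tot E_\tar] - F[\Tot E_\src]$ in $K_1\cN$. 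It therefore suffices to prove $[\Cone'(\Tot u)] = 0$.

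Next I realize the binary cone as a totalization. At each binary degree $n$, the map $u_n : FM_n \to N_n$ is a quasi-isomorphism in $C\cN$, so $\Cone(u_n) \in C^q\cN$; the two binary differentials of $E$ commute with $u_\bullet$ and therefore induce two binary differentials on $\Cone(u_\bullet)$, making it a binary chain complex in $C^q\cN$. A routine comparison of sign conventions identifies $\Cone'(\Tot u)$ with $\Tot \Cone(u_\bullet)$ inside $B^q\cN$, so I need only show $[\Tot \Cone(u_\bullet)] = 0$ in $K_1\cN$.

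Finally I filter in the binary direction. The naive filtration $F_k \Cone(u_\bullet)$ by binary degrees at most $k$ is admissible, and each successive subquotient $F_{k+1}/F_k$ is $\Cone(u_{k+1})$ concentrated in a single binary degree with vanishing binary differentials, hence totalizes to a diagonal object of $B^q\cN$ whose class in $K_1\cN$ vanishes. Since each $\Cone(u_n)$ is acyclic, induction on $k$ using the totalized extensions $0 \to \Tot F_k \to \Tot F_{k+1} \to \Tot(F_{k+1}/F_k) \to 0$ (two of whose terms are acyclic, forcing the third to be) shows every $\Tot F_k$ lies in $B^q\cN$, so the filtration is admissible there. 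Additivity over the filtration then gives $[\Tot \Cone(u_\bullet)] = 0$, and the result follows. The main obstacle is the bookkeeping in the middle step, where one must verify with consistent sign conventions that the componentwise cones really do reassemble into $\Cone'(\Tot u)$; the filtration argument itself is then purely formal, relying only on acyclicity of each $\Cone(u_n)$ and the diagonal vanishing used several times earlier in the paper.
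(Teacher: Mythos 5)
Your proposal is correct and follows essentially the same route as the paper: reduce via the mapping-cone exact sequence (plus Lemma \ref{K1shift}) to showing the binary cone of the comparison map has vanishing class, then filter in the $B$-direction so that the successive subquotients are cones of the componentwise quasi-isomorphisms, hence acyclic and diagonal, hence zero in $K_1\cN$. Your explicit identification of the binary cone with the totalization of the componentwise cones, and the two-out-of-three induction giving acyclicity of the filtration stages, merely spell out steps the paper leaves implicit.
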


\begin{proof}
  By the exact sequence of a mapping cone, it suffices to show that $\beta \defeq [\Cone ( \Tot E_\qi : \Tot FE_\src \to \Tot E_\tar )] \in K_1
  \cN$ vanishes.  Observe that for each $i \in \ZZ$, the map $E^i_\qi : FE^i_\src \to E^i_\tar$ is a quasi-isomorphism in $C \cN$, and hence its
  mapping cone is acyclic.  Thus the successive subquotients of the naive filtration of $E$ (in the $B$-direction) are in $B^q \cN$; moreover,
  they are diagonal, because the two differentials in the $B$-direction no longer contribute and there was only one differential in the
  $C$-direction.  Hence their classes vanish in $K_1 \cN$, and thus so does their sum $\beta$, as desired.
\end{proof}

\begin{lemma}\label{part3}
  The subsequence $ K_0\Om[0 \to \cM] \xrightarrow F K_0\Om[0 \to \cN] \to K_0\Om[\cM \xrightarrow F \cN]$ of the sequence in Theorem \ref{exseqK00000} is exact.
\end{lemma}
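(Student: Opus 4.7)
The plan is as follows. By Corollary \ref{seqiscomplex} the composition vanishes, so only exactness at $K_0\Om[0\to\cN]$ remains. Identifying this group with $K_1\cN$ via Lemma \ref{relKspecial}, the task is: given $N \in B^q\cN$ with $[(0,N,0)] = 0$ in $K_0\Om[F]$, produce $M \in B^q\cM$ such that $[FM] = [N]$ in $K_1\cN$.

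The first step is to apply Lemma \ref{K0lemmaimproved} to the pair $\Om[F] = [pB[F] \xleftarrow{\Delta} pC[F]]$. To do so, one verifies its hypotheses: $pB[F]$ has a cylinder functor inherited componentwise from the standard mapping cylinders of chain complexes in $C\cM^2$ and of binary chain complexes in $B\cN$, and the cone of any isomorphism in $B[F]$ admits an admissible filtration with diagonal successive subquotients by the degree-wise filtration argument of Lemma \ref{K0omegashift}. The lemma then produces explicit exact sequences in $B[F]$ of the form
\begin{align*}
E:\quad & 0 \to (0,N,0) \oplus C \oplus D \oplus V^0 \to V^1 \to V^2 \to 0,\\
E':\quad & 0 \to C' \oplus D' \oplus V^0 \to V^1 \to V^2 \to 0,
\end{align*}
where $C, C'$ are mapping cones of $p$-maps in $B[F]$ (each acyclic on both source and target sides, since $f_\src$ is a quasi-isomorphism in $C\cM^2$ and $f_\tar$ is an isomorphism in $B\cN$) and $D, D'$ are diagonal objects in the image of $\Delta : C[F] \to B[F]$.

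The next step is to package $E$ and $E'$ into a single object $\mathcal{E} \in BC[F]$ whose total complex is acyclic, then invoke Lemma \ref{TotBCF} to obtain $F[\Tot \mathcal{E}_\src] = [\Tot \mathcal{E}_\tar]$ in $K_1\cN$. The arrangement is chosen so that $[\Tot \mathcal{E}_\tar] = [N]$ in $K_1\cN$: the padding contributions from $V^0, V^1, V^2$ cancel between $E$ and $E'$; the target parts of $D, D'$ vanish in $K_1\cN$ by the degree-wise filtration argument of Lemma \ref{K0omegashift}; and the target parts of $C, C'$, being acyclic binary cones of isomorphisms, reduce via Lemmas \ref{K1topbotiso} and \ref{K1autoadd} to classes of elementary automorphisms, which vanish. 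Setting $M \defeq \Tot \mathcal{E}_\src \in B^q\cM$ then gives the desired preimage, with $[FM] = [N]$ in $K_1\cN$.

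The main obstacle is the asymmetry between the source category $C\cM^2$ (ordinary complexes, paired) and the target category $B\cN$ (binary complexes): the source data of $E$ and $E'$ does not directly combine into a single binary complex, so producing an acyclic total binary complex $\Tot \mathcal{E}_\src \in B^q\cM$ requires stitching together the two sequences via auxiliary cylinders while tracking the compatibility conditions imposed by the comparison maps. Careful sign bookkeeping, repeated use of the degree-wise filtration technique to absorb diagonal contributions, and Lemma \ref{K1diffrot} to handle the exchange of top and bottom differentials will be essential. All the necessary tools have been developed in the preceding sections, so no fundamentally new ideas are expected, only careful execution.
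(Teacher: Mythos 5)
Your reduction and toolkit match the paper's: quote \ref{seqiscomplex}, identify the middle group with $K_1\cN$, apply \ref{K0lemmaimproved} to $\Om[F]$ to obtain the two exact sequences, and aim to conclude via \ref{TotBCF} together with the $K_1$ lemmas. But the heart of the proof is precisely the step you defer. The sequences $E$ and $E'$ you get from \ref{K0lemmaimproved} have different underlying objects in degree $0$ (one contains $(0,N,0)\oplus C\oplus D\oplus V^0$, the other $C'\oplus D'\oplus V^0$), so they cannot simply be paired into a single object of $BC[F]$, and ``stitching together the two sequences via auxiliary cylinders'' is not a construction --- note also that $p$ fails the cylinder axiom, and a cylinder would not make the two graded objects coincide. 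The paper's device is different: first delete the acyclic summands $N\oplus C$ and $N'\oplus C'$ to pass to subcomplexes $L$ and $L'$ whose total complexes are still acyclic, and then \emph{mix tops and bottoms}, forming complexes of objects of $C[F]$ isomorphic (by explicit signed permutations of summands) to $\Top L \oplus \Bot L'$ and to $\Top L' \oplus \Bot L$; these two complexes have literally the same underlying graded objects, so together they define an object $E\in BC[F]$ with acyclic total complex, to which \ref{TotBCF} applies. Your plan has no counterpart of this mixing trick, and without it the object $\mathcal{E}$ your argument needs never gets built.

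Likewise, the equality $[\Tot \mathcal{E}_\tar]=[N]$ is asserted rather than proved, and in the paper it is the second delicate point. After disposing of $[C_\tar]$ and $[C'_\tar]$ (by filtrations with diagonal acyclic subquotients, not by \ref{K1topbotiso} applied to ``elementary automorphisms'' as you suggest), one still has to show $[\Tot E_\tar]=[\Tot L_\tar]-[\Tot L'_\tar]$; the paper does this by introducing an auxiliary complex $B\in CB[F]$ realizing $L\oplus \sw L'$, exhibiting an explicit signed permutation isomorphism $g:\Bot E \xrightarrow{\isom} \Bot B$ compatible with the differentials, and then combining \ref{K1topbotiso}, \ref{K1autoadd} and \ref{K1sw} (your citation of \ref{K1diffrot} is in the right spirit, but it enters only through its corollary \ref{K1sw}, and only after the comparison object $B$ has been set up). In short, nothing in your outline is wrong, and your use of \ref{K0lemmaimproved} and \ref{TotBCF} is exactly the paper's route, but the two steps that constitute the actual content of the proof --- building a genuine binary complex in $BC[F]$ out of mismatched data, and the sign/swap bookkeeping identifying its target class with the given class in $K_1\cN$ --- are missing, so as it stands the proposal is a plan with a gap at its center rather than a proof.
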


\begin{proof}
  By Corollary \ref{seqiscomplex} we know the sequence is a complex.   Consider an arbitrary element $[N] - [N']$ of $K_0\Om[0 \to \cN]$
  that dies in $K_0\Om[\cM \xrightarrow F \cN]$.  By \ref{K0lemmaimproved} we can find a pair of exact sequences
  \[ 
  \xymatrixrowsep {7pt}
  \xymatrix {
    Q : & 0 \ar[r] & N \oplus C \oplus D \oplus V^0 \ar[r] & V^1 \ar[r] & V^2 \ar[r] & 0 \\
    Q' : & 0 \ar[r] & N' \oplus C' \oplus D' \oplus V^0 \ar[r] & V^1 \ar[r] & V^2 \ar[r] & 0 
    }
  \]
  in $B[F]$, in which $C$ and $C'$ are mapping cones of maps in $p$ and $D$ and $D'$ are diagonal.  Since $N$, $N'$, $C$, and $C'$ are acyclic,
  the subcomplexes 
  \[ 
  \xymatrixrowsep {7pt}
  \xymatrixcolsep {44pt}
  \xymatrix {
    L :  & D \oplus V^0 \ar[r]^-{(p \,\, e)} & V^1 \ar[r]^e & V^2  \\
    L' : & D' \oplus V^0 \ar[r]^-{(p' \, e')} & V^1 \ar[r]^{e'} & V^2  
    }
  \]
  have acyclic total complexes in $B[F]$.  Applying the functors $\Top$ and $\Bot$ yields the complexes $\Top L$, $\Top L'$, $\Bot L$, and $\Bot
  L'$ of objects in $C[F]$ .  Up to a permutation of direct summands, the objects of $C[F]$ involved in the two complexes $\Top L \oplus \Bot
  L'$ and $\Top L' \oplus \Bot L$ are equal.  To achieve equality, we implement the permutation of direct summands explicitly to get the
  following pair of complexes $A$ and $A'$ connecting the same objects in $C[F]$, each with acyclic total complexes, and with isomorphisms $\theta : A \isom
  \Top L \oplus \Bot L'$ and $\theta' : A' \isom \Top L' \oplus \Bot L$, where the isomorphisms are implemented as permutations of direct
  summands, possibly with signs.
  \[ 
  \xymatrixrowsep {12pt}
  \xymatrix {
    A :  & \Top D \oplus \Bot D' \oplus \Top V^0 \oplus \Bot V^0 \ar[r]^-{q} \ar@2{=}[d] & \Top V^1 \oplus \Bot V^1 \ar[r]^-{r} \ar@2{=}[d] & \Top V^2 \oplus \Bot V^2 \ar@2{=}[d] \\
    A':  & \Bot D \oplus \Top D' \oplus \Top V^0 \oplus \Bot V^0 \ar[r]^-{q'} & \Top V^1 \oplus \Bot V^1 \ar[r]^-{r'} & \Top V^2 \oplus \Bot V^2 
    }
  \]
  (The diagram above does not commute.)
  The matrices of the four maps are given by the following equations.
  \[
    \begin{aligned}
      q \defeq & \begin{pmatrix} \Top p & 0 & \Top e & 0 \\ 0 & \Bot p' & 0 & \Bot e' \end{pmatrix} &\quad& r \defeq & \begin{pmatrix} \Top e & 0 \\ 0 & \Bot e' \end{pmatrix} \\
      q'\defeq & \begin{pmatrix} 0 & - \Top p' & \Top e' & 0 \\ \Bot p & 0 & 0 & \Bot e \end{pmatrix} && r'\defeq & \begin{pmatrix} \Top e' & 0 \\ 0 & \Bot e  \end{pmatrix}
    \end{aligned}
    \]
  We have inserted an unmotivated minus sign so the computation below comes out right, compensating for it with a minus sign in ${\theta'}$.

  Let $E$ be the object of $BC[F]$ provided by the pair $(A,A')$; its total complex is an acyclic object of $B[F]$.  Applying Lemma \ref{TotBCF}
  to $E$, we see that it suffices to show that $[N]-[N'] = [\Tot E_\tar]$ in $K_1 \cN$.

  The exact sequences 
  \[ 
  \xymatrixrowsep {7pt}
  \xymatrix {
    0 \ar[r] & \Tot L \ar[r] & \Tot Q \ar[r] & N \oplus C \ar[r] & 0 \\
    0 \ar[r] & \Tot L' \ar[r] & \Tot Q' \ar[r] & N' \oplus C' \ar[r] & 0
    }\]
  allow us to deduce that $[N] - [N'] = [\Tot L_\tar] - [\Tot L'_\tar] + [C'_\tar] - [C_\tar]$ in $K_1 \cN$.  

  Since $C_\tar$ and $C'_\tar$ are mapping cones of isomorphisms of binary complexes, the naive filtration of those binary complexes induces a
  filtration on them whose successive quotients are diagonal and acyclic, showing that $[C'_\tar] = [C_\tar] = 0$.  Hence it is enough to show
  that $[\Tot L_\tar] - [\Tot L'_\tar] = [\Tot E_\tar]$ in $K_1 \cN$.

  Notice the difference between two meanings of $\Tot$ here: $\Tot L_\tar$ and $\Tot L'_\tar$ are total binary complexes of complexes of binary complexes,
  whereas $\Tot E_\tar$ is the total binary complex of a binary complex of complexes.

  We implement the direct sum $L \oplus \sw L'$ up to a permutation $B \isom L \oplus \sw L'$ of summands by defining $B \in CB[F]$ to be the following
  chain complex of objects of $B[F]$.
  \[
  D \oplus \sw D' \oplus V^0 \oplus \sw V^0 
  \xrightarrow {\begin{pmatrix} p&0&e&0 \\ 0&\sw p'&0&\sw e' \end{pmatrix}} 
  V^1 \oplus \sw V^1 
  \xrightarrow {\begin{pmatrix} e&0 \\ 0&\sw e' \end{pmatrix}} 
  V^2 \oplus \sw V^2 
  \]

  Applying the functors $\Top$ and $\Bot$ yields the following chain complexes, displayed here as $A$ and $A'$ were displayed above.
  \[ 
  \xymatrixrowsep {12pt}
  \xymatrix {
    \Top B : & \Top D \oplus \Bot D' \oplus \Top V^0 \oplus \Bot V^0 \ar[r]^-{q}   & \Top V^1 \oplus \Bot V^1 \ar[r]^-{r}   & \Top V^2 \oplus \Bot V^2 \\
    \Bot B : & \Bot D \oplus \Top D' \oplus \Bot V^0 \oplus \Top V^0 \ar[r]^-{q''} & \Bot V^1 \oplus \Top V^1 \ar[r]^-{r''} & \Bot V^2 \oplus \Top V^2 
    }
  \]
  Here $q$ and $r$ are as in $A$ above, so $\Top B = A = \Top E$, but $q''$ and $r''$ differ from $q'$ and $r'$ as follows.
  \[
    \begin{aligned}
      q''\defeq & \begin{pmatrix} \Bot p & 0 & \Bot e & 0 \\ 0 & \Top p' & 0 & \Top e' \end{pmatrix} &\quad\quad& r''\defeq & \begin{pmatrix} \Bot e & 0 \\ 0 & \Top e' \end{pmatrix}
    \end{aligned}
    \]

  It suffices to show that $[\Tot E_\tar] = [\Tot B_\tar]$.  We know that $\Top \Tot B_\tar = \Top \Tot E_\tar$, so we introduce a permutation isomorphism
  $g : \Bot E \xrightarrow\isom \Bot B$ defined as follows, intending to apply \ref{K1topbotiso}.
  \[
  \begin{aligned}
    g^0 = \begin{pmatrix} 1&0&0&0 \\ 0&1&0&0 \\ 0&0&0&1 \\ 0&0&-1&0 \end{pmatrix} & : \Bot D \oplus \Top D' \oplus \Top V^0 \oplus \Bot V^0 \xrightarrow \isom \Bot D \oplus \Top D' \oplus \Bot V^0 \oplus \Top V^0 \\
    g^1 = \begin{pmatrix} 0&1 \\ -1&0 \end{pmatrix} & : \Top V^1 \oplus \Bot V^1 \xrightarrow \isom \Bot V^1 \oplus \Top V^1 \\
    g^2 = \begin{pmatrix} 0&1 \\ -1&0 \end{pmatrix} & : \Top V^2 \oplus \Bot V^2 \xrightarrow \isom \Bot V^2 \oplus \Top V^2 \\
  \end{aligned}
  \]

  The maps $g^i$ are chain maps, so it suffices to check that the following diagram commutes.
  \[ 
  \xymatrix {
    \Bot E \ar[d]^g : & \Bot D \oplus \Top D' \oplus \Top V^0 \oplus \Bot V^0 \ar[r]^-{q'} \ar[d]^-{g^0} & \Top V^1 \oplus \Bot V^1 \ar[r]^-{r'} \ar[d]^-{g^1} & \Top V^2 \oplus \Bot V^2 \ar[d]^-{g^2}  \\
    \Bot B : & \Bot D \oplus \Top D' \oplus \Bot V^0 \oplus \Top V^0 \ar[r]^-{q''} & \Bot V^1 \oplus \Top V^1 \ar[r]^-{r''} & \Bot V^2 \oplus \Top V^2 
    }
  \]
  Commutativity of the left hand square amounts to the following computation.
  \[
  \begin{aligned}
    g^1 q' & = 
    \begin{pmatrix} 0&1 \\ -1&0 \end{pmatrix} \begin{pmatrix} 0 & - \Top p' & \Top e' & 0 \\ \Bot p & 0 & 0 & \Bot e \end{pmatrix} \\
    & = \begin{pmatrix} \Bot p & 0 & 0 & \Bot e \\ 0 & \Top p' & - \Top e' & 0 \end{pmatrix} \\
    & = \begin{pmatrix} \Bot p & 0 & \Bot e & 0 \\ 0 & \Top p' & 0 & \Top e' \end{pmatrix} \begin{pmatrix} 1&0&0&0 \\ 0&1&0&0 \\ 0&0&0&1 \\ 0&0&-1&0 \end{pmatrix} \\ 
    & = q'' g^0
  \end{aligned}
  \]
  Commutativity of the right hand square is the same, except for the absence of the contributions from $D$ and $D'$.

  The map $g$ induces an isomorphism $\Tot g_\tar : \Tot \Bot E_\tar \xrightarrow\isom \Tot \Bot B_\tar$ that serves as a companion for the
  identity $\Tot \Top E_\tar = \Tot \Top B_\tar$.  Since it is a product of elementary matrices in each degree, \ref{K1autoadd} and
  \ref{K1topbotiso} yield the desired equality, $[\Tot E_\tar] = [\Tot B_\tar]$.
\end{proof}

\section{Proof of exactness, conclusion}

\begin{proof}[Proof of \ref{exseqK00000}]
  Apply \ref{part1}, \ref{part2}, and \ref{part3}.
\end{proof}

\bibliographystyle{hamsplain}
\bibliography{papers}

\providecommand{\bysame}{\leavevmode\hbox to3em{\hrulefill}\thinspace}
\begin{thebibliography}{1}

\bibitem{grayson-MR2947948}
Daniel~R. Grayson, \emph{Algebraic {$K$}-theory via binary complexes}, J. Amer.
  Math. Soc. \textbf{25} (2012), no.~4, 1149--1167, links:
  \href{http://www.math.uiuc.edu/K-theory/0988/}{preprint}.

\bibitem{waldhausen-MR802796}
Friedhelm Waldhausen, \emph{Algebraic {$K$}-theory of spaces}, Algebraic and
  geometric topology ({N}ew {B}runswick, {N}.{J}., 1983), Lecture Notes in
  Math., vol. 1126, Springer, Berlin, 1985, pp.~318--419, links:
  \href{http://www.math.uni-bielefeld.de/~fw/}{homepage}
  \href{http://www.math.uni-bielefeld.de/~fw/LNM1126_318-419.pdf}{pdf}
  \href{http://www.math.uni-bielefeld.de/~fw/LNM1126_318-419.ocr.djvu}{djvu}
  \href{http://www.math.uni-bielefeld.de/~fw/algebraic_K_theory_of_spaces.pdf}{pdf-amended}
  \href{http://www.math.uni-bielefeld.de/~fw/algebraic_K_theory_of_spaces.djvu}{djvu-amended}.

\end{thebibliography}


\end{document}